\documentclass[12pt,reqno]{amsart}

\usepackage{amsmath}
\usepackage{amscd,amssymb}
\usepackage{epsfig}

\usepackage{graphicx}
\usepackage{color}
\usepackage{hyperref}
\usepackage{booktabs}
\usepackage{amsfonts}
\usepackage{mathrsfs}
\usepackage{bm}
\usepackage{fancyhdr}
\usepackage{amsthm}
\usepackage{txfonts}
\usepackage{float}
\usepackage{multirow}

\def\qed{\ifmmode\square\else\nolinebreak\hfill
$\Box$\fi\par\vskip12pt}

\newtheorem{thm}{Theorem}[section]
\newtheorem{lemma}[thm]{Lemma}
\newtheorem{corollary}[thm]{Corollary}
\newtheorem{proposition}[thm]{Proposition}
\numberwithin{equation}{section}
\numberwithin{thm}{section}

\theoremstyle{definition}
\newtheorem{definition}[thm]{Definition}
\newtheorem{remark}[thm]{Remark}
\newtheorem{example}[thm]{Example}

\newcommand{\bF}{\mathbb F}

\newcommand{\bZ}{\mathbb Z}

\definecolor{Purple}{rgb}{0.5,0,0.5}

\begin{document}
\pagestyle{plain}
\parindent=19pt
\begin{titlepage}

\title{Nonexistence Results and Constructions for Signed Difference
Sets}

\begin{center}
\author{Zhiwen He$^{*}$, Jiayan Wu}\end{center}
\address{South China Normal University, Guangzhou 510631, China}
\email{ zhiwen$\_$he@zju.edu.cn }

\address{South China University of Technology, Guangzhou 510641, China}
\email{ jiayanwu@zju.edu.cn }

    \begin{abstract}
Signed difference sets (SDSs) extend ordinary difference sets by allowing negative coefficients.  We establish new nonexistence criteria and existence constructions for SDSs in finite abelian groups.  Using the classical self-conjugate-prime method and a support-refined quotient method, we derive four obstructions, including
the $C_2$-quotient, near-full-support, and small-defect obstructions. Applied cumulatively to the $67{,}823$ open group-specific cases in the database with $9\leq v\leq499$, these criteria rule out $45{,}361$ cases.  For existence, we construct an infinite family from PCP-type regular partial difference sets arising from
Desarguesian spreads and an explicit $(125,28,3)$-SDS in $C_5^3$ using quartic multiplicative characters.  These constructions settle three further open cases.
\end{abstract}

\keywords{Signed difference set; self-conjugate prime; intersection
number; partial difference set; Desarguesian spread; multiplicative
character.\\
{\bf  Mathematics Subject Classification (2020) 05E10 05E16 05E30 11T22 94A05}\\
$^*$Correspondence author}

\maketitle

\section{Introduction}

Difference sets and their generalizations provide algebraic tools for
constructing sequences, codes, and related combinatorial structures
with prescribed correlation properties.  They have found applications
in communications, information security, cryptography, and coding
theory~\cite{LiuS2018,Cusick2015,Ding}.  In particular, difference sets can be used to construct sequence
families with spectral nulls and good periodic correlation
properties~\cite{Tsai2011,YeZ2022}.

Let $G$ be a finite group of order $v$, and identify a subset of $G$ with its corresponding element in $\mathbb Z[G]$.  A signed subset of $G$ is an element
\[
D=\sum_{g\in G}a_g g,
\qquad a_g\in\{0,\pm1\}.
\]
Writing
\[
P=\{g\in G:a_g=1\},
\qquad
N=\{g\in G:a_g=-1\},
\]
we have $D=P-N$, and its support size is $k=|P|+|N|$. Following Gordon~\cite{MR4578178}, we call $D$ a $(v,k,\lambda)$-signed difference set (SDS), if
\begin{equation}
\label{Eqn_SDS}
DD^{-1}=(k-\lambda)0_G+\lambda G.
\end{equation}
If $N=\varnothing$, then $D=P$ is an ordinary $(v,k,\lambda)$-difference set (DS). If $N=\varnothing$ and $D$ satisfies 
\begin{equation}\label{Eqn_PDS}
DD^{-1}=\lambda D+\mu(G-D)+(k-\mu)\cdot 0_G,
\end{equation} 
we call $D$ a {\it $(v,k,\lambda,\mu)$-partial difference set} (PDS). A PDS is called regular if $0_G\notin D$ and $D^{-1}=D$.

If $\lambda=0$, the SDS corresponds to a $G$-invariant weighing matrix; when $G$ is cyclic, it corresponds to a circulant weighing matrix.  Moreover,
in the cyclic case the coefficients of an SDS form a ternary periodic
sequence whose periodic autocorrelation is $k$ at the zero shift and $\lambda$ at every nonzero shift. Thus SDSs provide a common framework that includes ordinary DSs, group-invariant weighing matrices, and ternary sequences with
two-level periodic autocorrelation.

Gordon~\cite{MR4578178} introduced SDSs and gave constructions based on
ordinary DSs and power residues, together with a quadratic-character
construction.  Gordon's work also employed multiplier and intersection number methods to reduce the computational search space. Subsequently, He et al.~\cite{he2024new} developed further existence constructions using
Paley-type PDSs, product methods, and fourth-order cyclotomic classes.  These works produced several infinite families. However, many group-specific cases in the signed difference set database~\cite{GordonRepository} remain open, and further existence and nonexistence results are still needed.

Nonexistence questions for ordinary DSs have been studied extensively, often through character sums, cyclotomic fields, multipliers, and intersection numbers; see, for example, \cite{Hall,MR179098,turyn1965nonexistence}. Although ordinary DSs and SDSs satisfy the same group-ring identity \eqref{Eqn_SDS}, their coefficients lie in $\{0,1\}$ and $\{0,\pm1\}$, respectively. Therefore, nonexistence results for DSs do not automatically extend to SDSs: every DS is an SDS, so SDS nonexistence implies DS nonexistence, but not conversely. This paper studies both nonexistence and existence problems for SDSs. We first adapt the classical methods of self-conjugate primes and cyclotomic character sums to SDSs. The resulting obstruction uses only the nonprincipal character equation
\[
\chi(D)\overline{\chi(D)}=k-\lambda
\]
and does not distinguish between the positive and negative supports.
Nevertheless, it rules out $44{,}872$ open group-specific cases in the current database.

To use the signed structure more directly, we develop a
support-refined quotient method that records, on each coset, both the
difference between the numbers of positive and negative elements and
the number of elements having nonzero coefficients.  These two
quantities have the same parity, which yields three further criteria:
the $C_2$-quotient obstruction for even-order groups, the near-full-support obstruction when $D$ misses one group element, and the small-defect obstruction when $D$ misses only a few elements in an elementary abelian group.  We state these results for abelian groups, where every subgroup is normal, although some arguments extend to groups with suitable normal subgroups.

For the existence problem, we use combinatorial structures whose blocks
have simple character sums.  In contrast to the Paley-PDS construction
of He et al.~\cite{he2024new}, our first construction uses PCP-type
regular PDSs from Desarguesian spreads to produce an infinite family in
elementary abelian \(2\)-groups, settling two open cases.  Our second
construction uses quartic characters on \(\mathbb F_{25}\) and
\(\mathbb F_5\) to obtain a \((125,28,3)\)-SDS in \(C_5^3\).  Unlike
Gordon's quadratic-character construction~\cite{MR4578178}, its verification involves
quartic Gauss and Jacobi sums.

The main contributions are summarized in
Table~\ref{tab:main-contributions}.  Before applying our results, the
database contains $67{,}823$ open group-specific cases with $9\leq v\leq499$. The nonexistence counts are cumulative: each row
records the additional exclusions obtained after the preceding
criteria have been applied. The database snapshot and verification code are available in the
accompanying reproducibility repository~\cite{SDSVerificationCode}.

\begin{table}[htbp]
\centering
\small
\setlength{\tabcolsep}{4pt}
\renewcommand{\arraystretch}{1.15}
\caption{Summary of the main contributions of this paper.}
\label{tab:main-contributions}
\begin{tabular}{p{0.14\textwidth}p{0.10\textwidth}p{0.35\textwidth}p{0.30\textwidth}}
\hline
Type & Result & Main idea & Consequence\\
\hline
Nonexistence & Thm.~\ref{thm:self-conjugate}
& Self-conjugate prime obstruction
& Rules out \(44{,}872\) open group-specific cases.\\
Nonexistence & Thm.~\ref{cor:C2-quotient}
& Support-refined \(C_2\)-quotient obstruction
& Rules out \(456\) further cases.\\
Nonexistence & Thm.~\ref{thm:near-full-support}
& Near-full-support parity obstruction
& Rules out \(29\) further cases.\\
Nonexistence & Thm.~\ref{thm:small-defect}
& Small-defect parity obstruction
& Rules out \(4\) further cases.\\
Existence & Thm.~\ref{thm:PCP-derived-SDS}
& PCP-type PDSs from Desarguesian spreads
& Gives an infinite family and settles two open cases.\\
Existence & Thm.~\ref{thm:125-28-3}
& Quartic multiplicative characters
& Settles the open \((125,28,3)\) case in \(C_5^3\).\\
\hline
\end{tabular}
\end{table}

The remainder of the paper is organized as follows.
Section~\ref{sec:preliminaries} introduces the group-ring and character
notation and recalls the required facts about cyclotomic fields.
Section~\ref{sec:self-conjugate} proves the self-conjugate prime
obstruction.  Section~\ref{sec:quotient} develops the support-refined
quotient method and derives the \(C_2\)-quotient, near-full-support,
and small-defect obstructions.  Section~\ref{Sec:construcionofSDS} presents the two existence
constructions based on PCP-type PDSs and quartic multiplicative
characters.  Section~\ref{sec_con} summarizes the results and discusses
possible directions for further research.

\end{titlepage}

\section{Preliminaries}\label{sec:preliminaries}
This section collects the notation and background used throughout the paper. We begin with the group-ring formulation of signed difference sets and record a character criterion for their existence.  We then recall the cyclotomic-field facts needed for the self-conjugate prime obstruction proved in Section~\ref{sec:self-conjugate}.

\subsection{Group rings and characters}

Let $G$ be a finite group. The integral group ring $\bZ[G]$ consists of all formal sums 
\[
\sum_{g\in G} a_g g,
\qquad a_g\in \mathbb Z.
\]
Addition and multiplication are defined by 
\begin{equation}
\sum_{g\in G}a_g g+\sum_{g\in G}b_g g=\sum_{g\in G}(a_g+b_g)g
\end{equation} 
and 
\begin{equation}
(\sum_{g\in G}a_g g)\cdot(\sum_{h \in G}b_h h)=\sum_{g,h\in G}a_gb_h(gh).
\end{equation} 
We use the standard convention of identifying a subset $U\subseteq G$ with the corresponding group-ring element 
\[
U=\sum_{u\in U}u\in\mathbb Z[G].
\]
Thus the same symbol $U$ may denote both a subset of $G$ and its
associated element in $\mathbb Z[G]$.

When $G$ is abelian, we also use its character group.  A character of $G$ is a homomorphism from $G$ to the multiplicative group of complex numbers of absolute value $1$.  The principal character is denoted by $\chi_0$, and is defined by
\[
\chi_0(x)=1
\qquad
\text{for all }x\in G.
\]
For a subset $S\subseteq G$, we write
\[
\chi(S)=\sum_{s\in S}\chi(s).
\]

Let $G$ be a finite abelian group and $D=\sum_{g\in G}a_g g$ be a signed set. We put $n=k-\lambda$ and $s=|P|-|N|$. Applying the principal character to the defining identity Eq. \eqref{Eqn_SDS} gives
\begin{equation}
\label{eq:principal-character}
s^2=n+\lambda v
   =k+\lambda(v-1).
\end{equation}

\begin{lemma}[Character equation]
\label{lem:character-equation}
Let $G$ be a finite abelian group, and let $D$ be a $(v,k,\lambda)$-SDS in $G$.  Put $n=k-\lambda$. Then, for every nonprincipal character $\chi$ of $G$,
\begin{equation}\label{eqn:chichin}
    \chi(D)\overline{\chi(D)}=n.
\end{equation}
\end{lemma}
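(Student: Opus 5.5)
The plan is to extend characters linearly to the group ring and then apply a nonprincipal $\chi$ to the defining identity \eqref{Eqn_SDS}.

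First, extend $\chi$ to a ring homomorphism $\chi:\mathbb Z[G]\to\mathbb C$ by setting
\[
\chi\Bigl(\sum_{g} a_g g\Bigr)=\sum_g a_g\chi(g).
\]
This map is additive by definition. It is multiplicative because $\chi(gh)=\chi(g)\chi(h)$, and the product formula in $\mathbb Z[G]$ is bilinear. In particular, $\chi(D)=\sum_g a_g\chi(g)$, which agrees with the notation $\chi(S)$ when $D$ is an ordinary subset.

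Next, identify $\chi(D^{-1})$, where $D^{-1}=\sum_g a_g g^{-1}$. Since $\chi(g)$ has absolute value $1$, we have $\chi(g^{-1})=\chi(g)^{-1}=\overline{\chi(g)}$. The coefficients $a_g\in\{0,\pm1\}$ are real, so
\[
\chi(D^{-1})=\sum_g a_g\overline{\chi(g)}=\overline{\chi(D)}.
\]
Applying $\chi$ to the left side of \eqref{Eqn_SDS} therefore gives $\chi(D)\overline{\chi(D)}$.

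On the right side, $\chi(0_G)=1$ because $\chi$ is a homomorphism and $0_G$ is the identity of $G$. The remaining term is $\chi(G)=\sum_{g\in G}\chi(g)$, and we show it vanishes. Since $\chi$ is nonprincipal, pick $h$ with $\chi(h)\neq1$. Reindexing the sum by $g\mapsto hg$ gives $\chi(h)\chi(G)=\chi(G)$, so $\chi(G)=0$. The right side thus evaluates to $(k-\lambda)\cdot 1+\lambda\cdot 0=n$, which is \eqref{eqn:chichin}.

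No step here is a real obstacle. The only points to state explicitly are that the linear extension of $\chi$ is a ring homomorphism, and that $\chi(D^{-1})=\overline{\chi(D)}$ holds because the coefficients of $D$ are real.
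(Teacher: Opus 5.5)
Your proposal is correct and is the same argument as the paper's: apply $\chi$ to the defining identity $DD^{(-1)}=(k-\lambda)0_G+\lambda G$, then use $\chi(D^{(-1)})=\overline{\chi(D)}$ and $\chi(G)=0$. You also spell out the justifications the paper leaves implicit, namely the linear extension of $\chi$ to a ring homomorphism, the reality of the coefficients, and the reindexing argument for $\chi(G)=0$.
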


\begin{proof}
Applying $\chi$ to Eq. \eqref{Eqn_SDS} gives
\[
\chi(D)\chi(D^{(-1)})
=\chi(D)\overline{\chi(D)}=
n+\lambda\chi(G)
=n.
\]
\end{proof}

\subsection{Cyclotomic preliminaries}
\label{subsec:cyclotomic-preliminaries}

In this subsection, we recall the basic facts on cyclotomic fields, prime decomposition, and Frobenius automorphisms that will be needed in the proof of our nonexistence criterion.  All the number-theoretic results stated below are standard. For the reader's convenience, we recall the relevant definitions and precise statements; for proofs and further background, see \cite[Chapters~12 and~13]{ireland1990classical}.

For a positive integer $m$, let $\zeta_m=e^{2\pi i/m}$ be a primitive $m$-th root of unity, and put $K_m=\mathbb Q(\zeta_m)$. We denote by $\mathcal O_{K_m}$ the ring of integers of $K_m$, that is, the set of all elements of $K_m$ which are roots of
monic polynomials with coefficients in $\mathbb Z$. We shall use the Galois structure of $K_m/\mathbb Q$. Recall that $\operatorname{Gal}(K_m/\mathbb Q)$ is the group of field automorphisms of $K_m$ that fix $\mathbb Q$ pointwise. The standard cyclotomic-field isomorphism is
\begin{equation}
\label{eq:gal-cyclotomic}
\operatorname{Gal}(K_m/\mathbb Q)
\cong
(\mathbb Z/m\mathbb Z)^\times.
\end{equation}
More precisely, for every $a\in(\mathbb Z/m\mathbb Z)^\times$
, the corresponding
automorphism $\sigma_a$ is determined by $\sigma_a(\zeta_m)=\zeta_m^a$. 
Conversely, every element of $\operatorname{Gal}(K_m/\mathbb Q)$ is of this form. In particular, complex conjugation on $K_m$ is the automorphism $\sigma_{-1}$, where $\sigma_{-1}(\zeta_m)=\zeta_m^{-1}$. 

Let $p$ be a rational prime.  A prime ideal $\mathfrak P$ of $\mathcal O_{K_m}$ is said to
\emph{lie above $p$} if $\mathfrak P\cap\mathbb Z=p\mathbb Z$. Equivalently, $\mathfrak P$ occurs in the prime ideal
factorization of the extended ideal $p\mathcal O_{K_m}$. More generally, one may write
\[
p\mathcal O_{K_m}
=
\mathfrak P_1^{e_1}\cdots\mathfrak P_g^{e_g},
\]
where the $\mathfrak P_i$'s are the distinct prime ideals
of $\mathcal O_{K_m}$ lying above $p$. The integer $e_i$ is called the ramification index of $\mathfrak P_i$ over $p$. We say that $p$ is \emph{unramified} in $K_m/\mathbb Q$
if $e_1=\cdots=e_g=1$. For cyclotomic fields, one has the standard ramification criterion 
\begin{equation}
\label{eq:unramified-cyclotomic}
p\nmid m
\quad\Longrightarrow\quad
p\text{ is unramified in }K_m/\mathbb Q.
\end{equation}
For a prime ideal $\mathfrak P$ of $\mathcal O_{K_m}$ lying above $p$,
its decomposition group is defined by
\[
\mathcal D(\mathfrak P\mid p)
=
\{\sigma\in \operatorname{Gal}(K_m/\mathbb Q):
\sigma(\mathfrak P)=\mathfrak P\}.
\]
When $p\nmid m$, the decomposition group is generated by the
Frobenius automorphism $\sigma_p$, namely,
\begin{equation}
\label{eq:decomp-frobenius}
\mathcal D(\mathfrak P\mid p)
=
\langle \sigma_p\rangle,
\qquad
\sigma_p(\zeta_m)=\zeta_m^p.
\end{equation}

The following standard consequence of the Frobenius description will
be used in the proof of the self-conjugate prime obstruction.

\begin{lemma}
\label{lem:self-conjugate-prime}
Let $m>1$, let $p$ be a rational prime with $p\nmid m$,
and let $\mathfrak P$ be a prime ideal of $\mathcal O_{K_m}$ lying above $p$.
If $p^j\equiv-1\pmod m$ for some integer $j\ge1$, then $\overline{\mathfrak P}=\mathfrak P$. 
\end{lemma}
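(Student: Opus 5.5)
The plan is to show that complex conjugation $\sigma_{-1}$ lies in the decomposition group $\mathcal D(\mathfrak P\mid p)$. Once this is done, $\overline{\mathfrak P}=\sigma_{-1}(\mathfrak P)=\mathfrak P$ follows directly from the definition of the decomposition group as the stabilizer of $\mathfrak P$.

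First, since $p\nmid m$, the prime $p$ is a unit modulo $m$. Hence $\sigma_p$ is a well-defined element of $\operatorname{Gal}(K_m/\mathbb Q)$ under the isomorphism \eqref{eq:gal-cyclotomic}. By \eqref{eq:decomp-frobenius}, $\sigma_p$ generates $\mathcal D(\mathfrak P\mid p)$; here the hypothesis $p\nmid m$ is used again through \eqref{eq:unramified-cyclotomic}, which guarantees unramifiedness.

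Next, I compute $\sigma_p^j$. Since $\sigma_a\sigma_b=\sigma_{ab}$, which one checks on $\zeta_m$ because $\zeta_m$ generates $K_m$, we get $\sigma_p^j(\zeta_m)=\zeta_m^{p^j}$. The hypothesis $p^j\equiv-1\pmod m$ then gives $\sigma_p^j(\zeta_m)=\zeta_m^{-1}=\sigma_{-1}(\zeta_m)$. An automorphism of $K_m$ is determined by its value on $\zeta_m$, so $\sigma_p^j=\sigma_{-1}$, which is complex conjugation. Therefore $\sigma_{-1}\in\langle\sigma_p\rangle=\mathcal D(\mathfrak P\mid p)$, and so $\sigma_{-1}(\mathfrak P)=\mathfrak P$.

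The last step is to identify $\overline{\mathfrak P}$, the set of complex conjugates of elements of $\mathfrak P$, with $\sigma_{-1}(\mathfrak P)$. This holds because $K_m\subset\mathbb C$ is stable under conjugation and conjugation restricts to $\sigma_{-1}$ on $K_m$. There is no real obstacle here: the only substantive input is the Frobenius description \eqref{eq:decomp-frobenius}, which the paper takes as standard. Alternatively, one could avoid the full decomposition-group statement by using only that $\sigma_p(\mathfrak P)=\mathfrak P$. This follows because $\sigma_p(x)\equiv x^p \pmod{\mathfrak P}$ for all $x\in\mathcal O_{K_m}$, which forces $\sigma_p(\mathfrak P)\subseteq\mathfrak P$. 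Iterating $j$ times then gives the claim.
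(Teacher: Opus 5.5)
Your proposal is correct and follows the paper's proof essentially step for step. You evaluate at $\zeta_m$ to get $\sigma_p^j=\sigma_{-1}$, deduce that complex conjugation lies in $\mathcal D(\mathfrak P\mid p)=\langle\sigma_p\rangle$, and read off $\overline{\mathfrak P}=\mathfrak P$. Your alternative via $\sigma_p(x)\equiv x^p \pmod{\mathfrak P}$ also works, provided you upgrade the resulting containment $\overline{\mathfrak P}\subseteq\mathfrak P$ to equality using the maximality of nonzero prime ideals.
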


\begin{proof}
By \eqref{eq:decomp-frobenius}, $\mathcal D(\mathfrak P\mid p)=\langle\sigma_p\rangle$. 
Since $p^j\equiv-1\pmod m$, we have $\zeta_m^{p^j}=\zeta_m^{-1}$. Therefore
\[
\sigma_p^j(\zeta_m)
=
\zeta_m^{p^j}
=
\zeta_m^{-1}
=
\sigma_{-1}(\zeta_m).
\]
Since $K_m=\mathbb Q(\zeta_m)$, an automorphism of $K_m$ is determined by its value on $\zeta_m$. Hence $\sigma_p^j=\sigma_{-1}$. It follows that $\sigma_{-1}\in \mathcal D(\mathfrak P\mid p)$. By the definition of the decomposition group, $\sigma_{-1}(\mathfrak P)=\mathfrak P$. Since \(\sigma_{-1}\) is complex conjugation, this means exactly
that $\overline{\mathfrak P}=\mathfrak P$. 
\end{proof}

\section{A self-conjugate prime obstruction for signed difference sets}
\label{sec:self-conjugate}
The use of self-conjugate primes and cyclotomic character sums is classical in the theory of ordinary DS; see, for example,
Turyn~\cite{MR179098}.  In this section we prove the corresponding character-norm obstruction for SDS. This is the signed analogue of a classical character-norm obstruction for ordinary DS, and its proof depends only on the character equation \eqref{eqn:chichin}. 

For a rational prime $p$ and a positive integer $n$, we denote
by $v_p(n)$ the $p$-adic valuation of $n$, namely the largest
nonnegative integer $e$ such that $p^e\mid n$. Thus, if $n=p^e n_0$ and $p\nmid n_0$, then $v_p(n)=e$. We denote by $\mathcal O_{K_m}$ the ring of integers of $K_m$, that is, the set of algebraic integers contained in $K_m$. Throughout, $C_v$ denotes the cyclic group of order $v$.

\begin{lemma}
\label{lem:character-order}
Let $G$ be a finite abelian group with exponent $E=\exp(G)$. If $m\mid E$, then $G$ has a character of order $m$.
\begin{proof}
By the structure theorem for finite abelian groups, one may write
\[
G
\cong
C_{d_1}\times C_{d_2}\times\cdots\times C_{d_r},
\qquad
d_1\mid d_2\mid\cdots\mid d_r.
\]
The exponent of $G$ is $E=d_r$. Since $m\mid E$, the cyclic group $C_E$ has a character
of order $m$.  Composing such a character with the natural
projection 
\[
\pi:G\longrightarrow C_E
\]
gives a character of $G$ of order $m$.
\end{proof}
\end{lemma}

\begin{thm}[Self-conjugate prime obstruction]
\label{thm:self-conjugate}
Let $G$ be a finite abelian group with exponent $E=\exp(G)$, and let $D$ be a $(v,k,\lambda)$-SDS in $G$. Put $n=k-\lambda$. Suppose that there exist an integer $m>1$, a rational prime $p$, and an integer $j\ge1$ such that $m\mid E$ and $p^j\equiv-1\pmod m$. Then $v_p(n)$ is even.

Consequently, if $m\mid E$, $p^j\equiv-1\pmod m$ and $v_p(k-\lambda)\ \text{is odd}$, 
then no $(v,k,\lambda)$-SDS exists in $G$.
\end{thm}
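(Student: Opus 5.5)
Since $p^j\equiv-1\pmod m$ forces $\gcd(p,m)=1$, we are in the setting of Lemma~\ref{lem:self-conjugate-prime}. By Lemma~\ref{lem:character-order} there is a character $\chi$ of $G$ of order exactly $m$. It is nonprincipal because $m>1$. Its values are $m$-th roots of unity, so $X:=\chi(D)=\sum_g a_g\chi(g)$ lies in $\mathbb Z[\zeta_m]\subseteq\mathcal O_{K_m}$. Lemma~\ref{lem:character-equation} then gives
\[
X\overline{X}=n
\]
in $\mathcal O_{K_m}$. Note that $n=k-\lambda>0$ in any nontrivial case; in fact $X\overline X=n$ with $X\neq0$ forces $n\ge1$. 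So I will compare the ideal factorizations of the two sides.

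Next, write $p\mathcal O_{K_m}=\mathfrak P_1\cdots\mathfrak P_g$ with distinct primes $\mathfrak P_i$. This factorization is squarefree by \eqref{eq:unramified-cyclotomic}. Fix $\mathfrak P=\mathfrak P_1$ and let $\operatorname{ord}_{\mathfrak P}$ denote the $\mathfrak P$-adic valuation on nonzero elements of $\mathcal O_{K_m}$. Because $p$ is unramified, $\operatorname{ord}_{\mathfrak P}(p)=1$. Also, any rational integer prime to $p$ is coprime to $\mathfrak P$, since $\mathfrak P\cap\mathbb Z=p\mathbb Z$. Hence
\[
\operatorname{ord}_{\mathfrak P}(n)=v_p(n).
\]

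The key step uses Lemma~\ref{lem:self-conjugate-prime}, which gives $\overline{\mathfrak P}=\mathfrak P$. Applying complex conjugation, which is an automorphism of $\mathcal O_{K_m}$, we get
\[
\operatorname{ord}_{\mathfrak P}(\overline X)=\operatorname{ord}_{\overline{\mathfrak P}}(\overline X)=\operatorname{ord}_{\mathfrak P}(X).
\]
It follows that
\[
v_p(n)=\operatorname{ord}_{\mathfrak P}(X\overline X)=2\operatorname{ord}_{\mathfrak P}(X),
\]
which is even. The "consequently" clause is just the contrapositive.

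The only delicate points are these: ensuring $\chi$ can be taken of order exactly $m$ (handled by Lemma~\ref{lem:character-order}), noting $p\nmid m$ so that unramifiedness applies, and justifying that conjugation preserves valuations at a self-conjugate prime. I expect the last of these to be the main, though mild, obstacle. It follows from the fact that $\sigma_{-1}$ maps the factorization of $(X)$ to that of $(\overline X)$ while fixing $\mathfrak P$.
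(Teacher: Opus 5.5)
Your proposal is correct and follows essentially the same argument as the paper. Both take a character $\chi$ of order exactly $m$ and use the norm equation $\chi(D)\overline{\chi(D)}=n$ in $\mathcal O_{K_m}$, then combine unramifiedness of $p$ (via $p\nmid m$) with the self-conjugacy $\overline{\mathfrak P}=\mathfrak P$ to get $v_p(n)=2\,v_{\mathfrak P}(\chi(D))$. Your justification of $n\ge1$ is circular as phrased, since $X\neq0$ holds only because $n\neq0$; as in the paper, positivity of $n$ is simply implicit in writing $v_p(n)$, and the only exceptions are degenerate sets such as $D=G$ with $\lambda=k$.
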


\begin{proof}
Assume that $D$ is a $(v,k,\lambda)$-SDS in $G$. Since $m\mid E=\exp(G)$, 
Lemma~\ref{lem:character-order} implies that $G$ has a
character $\chi$ of order $m$.
Fix a primitive $m$-th root of unity $\zeta_m$.
Since $\chi$ has order $m$, all values of $\chi$ are
$m$-th roots of unity. Hence
\[
\alpha:=\chi(D)\in\mathbb Z[\zeta_m].
\]
Since \(\zeta_m\) is an algebraic integer, every element of
\(\mathbb Z[\zeta_m]\) is an algebraic integer. Therefore $\alpha\in\mathcal O_{K_m}$. 
By Lemma~\ref{lem:character-equation},
\begin{equation}
\label{eq:self-norm}
\alpha\overline{\alpha}=n.
\end{equation}

Let $e=v_p(n)$ and write $n=p^e n_0$ and $p\nmid n_0$. We first note that $p\nmid m$. Indeed, if $p\mid m$, then the congruence $p^j\equiv-1\pmod m$ would imply $m\mid p^j+1$, 
and hence in particular $p\mid p^j+1$, which is impossible. Therefore $p$ is unramified in $K_m=\mathbb Q(\zeta_m)$ by
\eqref{eq:unramified-cyclotomic}.  Let $\mathfrak P$ be any prime ideal of $\mathcal O_{K_m}$ lying above $p$. Since $p$ is unramified, the factorization $p\mathcal O_{K_m}
=
\mathfrak P_1\cdots\mathfrak P_g$ 
contains no repeated prime ideal. Consequently, $p^e\mathcal O_{K_m}
=
\mathfrak P_1^e\cdots\mathfrak P_g^e$.

Moreover, no prime ideal lying above $p$ divides $n_0\mathcal O_{K_m}$.   Indeed, if $\mathfrak P\mid n_0\mathcal O_{K_m}$, then $n_0\in\mathfrak P$. Since $n_0\in\mathbb Z$ and $\mathfrak P\cap\mathbb Z=p\mathbb Z$, 
this would imply $p\mid n_0$, contrary to the choice of $n_0$.
It follows that the exponent of $\mathfrak P$ in the
factorization of the principal ideal $(n)=n\mathcal O_{K_m}$ is exactly $e$. In other words, 
\begin{equation}
\label{eq:P-valuation-n}
v_{\mathfrak P}((n))
=
e
=
v_p(n).
\end{equation}

On the other hand, the congruence $p^j\equiv-1\pmod m$ and Lemma~\ref{lem:self-conjugate-prime} imply that
\begin{equation}
\label{eq:P-fixed-conjugation}
\overline{\mathfrak P}=\mathfrak P.
\end{equation}

Taking principal ideals in \eqref{eq:self-norm}, we obtain $(\alpha)(\overline{\alpha})=(n)$. 
Taking the $\mathfrak P$-adic valuation of both sides gives
\begin{equation}
\label{eq:P-valuation-alpha}
v_{\mathfrak P}((\alpha))
+
v_{\mathfrak P}((\overline{\alpha}))
=
v_{\mathfrak P}((n)).
\end{equation}

We now explain why the two terms on the left-hand side are equal.
Complex conjugation sends the prime ideal factorization of an ideal
to the prime ideal factorization of its conjugate. Thus, for every
ideal $\mathfrak a\subseteq\mathcal O_{K_m}$, $v_{\mathfrak P}(\overline{\mathfrak a})
=
v_{\overline{\mathfrak P}}(\mathfrak a)$. Applying this to $\mathfrak a=(\alpha)$ and using \eqref{eq:P-fixed-conjugation}, we obtain $v_{\mathfrak P}((\overline{\alpha}))
=
v_{\overline{\mathfrak P}}((\alpha))
=
v_{\mathfrak P}((\alpha))$. Therefore \eqref{eq:P-valuation-alpha} becomes $v_{\mathfrak P}((n))
=
2v_{\mathfrak P}((\alpha))$. Hence $v_{\mathfrak P}((n))$ is even.

Finally, by \eqref{eq:P-valuation-n}, $v_p(n)=v_{\mathfrak P}((n))$, 
and therefore $v_p(n)$ is even.  This proves the theorem.
\end{proof}

\begin{corollary}
\label{cor:q-plus-one}
Let $q$ be a rational prime.
If $q+1\mid v$ and $v_q(k-\lambda)$ is odd, then there is no cyclic $(v,k,\lambda)$-signed difference set. In particular, if $3\mid v$ and $v_2(k-\lambda)$ is odd, then no cyclic $(v,k,\lambda)$-signed difference set exists.
\begin{proof}
Let $G=C_v$. Since $\exp(G)=v$, the assumption $q+1\mid v$ implies $q+1\mid \exp(G)$. Taking $m=q+1$, 
we have $q\equiv -1\pmod m$. Hence Theorem~\ref{thm:self-conjugate}, with $p=q$ and $j=1$, shows that $v_q(k-\lambda)$ must be even. Therefore, if $v_q(k-\lambda)$ is odd, no such cyclic signed difference set can exist. The final assertion follows by taking $q=2$.
\end{proof}
\end{corollary}

\begin{example}
In the current signed difference set database, there are $67{,}823$ open group-specific entries, with group order $v$ ranging from $9$ to $499$.  Applied to these open entries,
Theorem~\ref{thm:self-conjugate} rules out $44{,}872$ of them.
Here are three small illustrative cases.

\begin{itemize}
\item Consider $\operatorname{SDS}(37,13,1,[37])$. Here $G=C_{37}$, so $E=\exp(G)=37$. Moreover,
\[
n=k-\lambda=12,
\qquad
3^9\equiv -1\pmod{37},
\qquad
v_3(12)=1.
\]
Taking $m=37$ and $p=3$, Theorem~\ref{thm:self-conjugate}
excludes this cyclic SDS.

\item Consider $\operatorname{SDS}(51,24,6,[51])$. Here
\[
n=k-\lambda=18,
\qquad
2+1=3\mid 51,
\qquad
v_2(18)=1.
\]
Thus Corollary~\ref{cor:q-plus-one}, with $q=2$, excludes this cyclic
SDS.

\item Consider $\operatorname{SDS}(48,37,4,[48])$. Here
\[
n=k-\lambda=33,
\qquad
3+1=4\mid 48,
\qquad
v_3(33)=1.
\]
Thus Corollary~\ref{cor:q-plus-one}, with $q=3$, excludes this
cyclic SDS.
\end{itemize}
\end{example}

\section{Support-refined intersection-number obstructions}
\label{sec:quotient}

Intersection-number methods are a standard tool in difference-set
theory, and they have also been used for SDS; see, for example, Gordon~\cite[Lemma~5.2]{MR4578178}.  Gordon's formulation
uses a map from $\mathbb Z[G]$ to $\mathbb Z[H]$, after choosing
coset representatives for a subgroup $H\leq G$.  Here we take the
dual quotient viewpoint, using the natural map from $\mathbb Z[G]$ to $\mathbb Z[G/H]$. Under this map, the coefficient of a coset in the image of $D=P-N$ is its signed intersection number with that coset. This is essentially the same intersection-number method, but the quotient formulation makes the coset-wise parity information explicit. We further record the support size on each coset; its parity relation with the signed intersection number leads to the quotient obstructions developed in this section.

\subsection{Support-refined intersection numbers}\label{subsetction4.1}
In this subsection we introduce the quotient intersection numbers used
throughout the section.  For each coset of a subgroup $H\leq G$, we
record both the signed intersection number and the support size of
$D$. These two quantities coincide for ordinary difference
sets, but differ in the signed setting; their parity relation is the
extra information used in the obstructions below. 

 Let \(H\leq G\), and let
\[
\pi:G\longrightarrow Q:=G/H
\]
be the canonical quotient map.  For $x\in Q$, denote by $C_x=\pi^{-1}(x)$ the corresponding coset of $H$.  We define
\[
A_x
=
|P\cap C_x|-|N\cap C_x|
\]
and
\[
K_x
=
|P\cap C_x|+|N\cap C_x|.
\]
Thus $A_x$ is the signed intersection number of $D=P-N$ with
$C_x$, while $K_x$ is the support size of $D$ on $C_x$. Clearly, $\sum_{x\in Q}A_x=|P|-|N|=s$ and $\sum_{x\in Q}K_x=|P|+|N|=k$. 

\begin{lemma}[Support-refined quotient lemma]
\label{lem:support-refined}
Let \(h=|H|\).  Then for every $x\in Q$,
\begin{equation}
\label{eq:quotient-parity}
|A_x|\leq K_x\leq h,
\qquad
K_x\equiv A_x\pmod 2.
\end{equation}
Furthermore,
\begin{equation}
\label{eq:quotient-square}
\sum_{x\in Q}A_x^2
=
n+\lambda h,
\end{equation}
and, for every nonzero element $y\in Q$,
\begin{equation}
\label{eq:SR-correlation}
\sum_{x\in Q}A_xA_{x-y}
=
\lambda h.
\end{equation}
\end{lemma}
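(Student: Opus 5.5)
The plan is to push the defining identity \eqref{Eqn_SDS} forward along the canonical ring homomorphism $\pi:\mathbb Z[G]\to\mathbb Z[Q]$, extended linearly from $\pi:G\to Q$. The local inequalities and the parity congruence in \eqref{eq:quotient-parity} are elementary counting facts, and they come first. Write $p_x=|P\cap C_x|$ and $q_x=|N\cap C_x|$. Then $|A_x|=|p_x-q_x|\le p_x+q_x=K_x$. Since $P$ and $N$ are disjoint subsets of $C_x$ and $|C_x|=h$, we also have $K_x\le h$. Finally, $K_x-A_x=2q_x$ is even, which gives $K_x\equiv A_x\pmod 2$.

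For the global identities, observe that the coefficient of $x\in Q$ in $\pi(D)$ equals $\sum_{g\in C_x}a_g=p_x-q_x$. Hence
\[
\pi(D)=\sum_{x\in Q}A_x\,x .
\]
Because $\pi$ is a ring homomorphism compatible with inversion, $\pi(D^{(-1)})=\sum_{x}A_x\,(-x)$. Applying $\pi$ to \eqref{Eqn_SDS} therefore gives
\[
\Bigl(\sum_{x}A_x x\Bigr)\Bigl(\sum_{z}A_z(-z)\Bigr)=n\,0_Q+\lambda\,\pi(G)=n\,0_Q+\lambda h\,Q,
\]
since every coset contains $h$ elements of $G$.

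Next I compare coefficients on the two sides, writing $Q$ additively to match the notation of \eqref{eq:SR-correlation}. The product on the left has coefficient at $y\in Q$ equal to $\sum_{x-z=y}A_xA_z=\sum_{x}A_xA_{x-y}$. The right side has coefficient $n+\lambda h$ at $y=0_Q$ and $\lambda h$ at every $y\ne 0_Q$. Reading off the coefficient at $0_Q$ yields \eqref{eq:quotient-square}, and reading it off at a nonzero $y$ yields \eqref{eq:SR-correlation}.

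No step should be a real obstacle. The only care needed is bookkeeping: one must check that $\pi(G)=hQ$, and that the $x-y$ indexing is consistent with the order of the factors in $DD^{-1}$. The correlation sum is symmetric under $y\mapsto -y$, so either ordering gives the same identity. An alternative is to verify the identities by applying characters of $Q$, lifted to $G$, and invoking Lemma~\ref{lem:character-equation}, but the direct group-ring computation is shorter.
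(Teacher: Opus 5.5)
Your proposal is correct and matches the paper's proof: the bounds and the parity come from $K_x-A_x=2|N\cap C_x|$, and the two identities come from applying the linear extension of $\pi$ to $DD^{(-1)}=n0_G+\lambda G$. You then use $\pi(G)=h\sum_{x\in Q}x$ and compare coefficients at $0$ and at a nonzero $y$, as the paper does. Your check of the $x-y$ indexing, and of the symmetry under $y\mapsto -y$, is bookkeeping the paper leaves implicit.
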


\begin{proof}
By the definitions of $A_x$ and $K_x$, $K_x-A_x=2|N\cap C_x|$, 
so $K_x\equiv A_x\pmod2$.  Also,
\[
|A_x|
\le |P\cap C_x|+|N\cap C_x|
=K_x
\le |C_x|=h.
\]
This proves Eq. \eqref{eq:quotient-parity}. 

Extend the quotient map linearly to a group-ring homomorphism $\pi:\mathbb Z[G]\longrightarrow\mathbb Z[Q]$. By the definition of $A_x$, we have $\pi(D)=\sum_{x\in Q}A_xx$. Since every coset of $H$ has cardinality $h$, $\pi(G)=h\sum_{x\in Q}x$. Applying $\pi$ to the signed difference set identity $DD^{(-1)}=n0_G+\lambda G$ gives
\begin{equation}
\label{eq:SR-master}
\left(
\sum_{x\in Q}A_xx
\right)
\left(
\sum_{x\in Q}A_x(-x)
\right)
=
n0_G+\lambda h\sum_{x\in Q}x.
\end{equation}

Comparing the coefficient of $0_G$ in
Eq. \eqref{eq:SR-master} gives $\sum_{x\in Q}A_x^2=n+\lambda h$, which proves Eq. \eqref{eq:quotient-square}.
For any nonzero $y\in Q$, comparison of the coefficient of $y$
gives $\sum_{x\in Q}A_xA_{x-y}=\lambda h$, which proves Eq. \eqref{eq:SR-correlation}. This completes the proof.
\end{proof}

\begin{thm}[\(C_2\)-quotient obstruction]
\label{cor:C2-quotient}
Let $G$ be a finite abelian group of even order $v$. For any $(v,k,\lambda)$-SDS in $G$, $k-\lambda$ is a perfect square.
\end{thm}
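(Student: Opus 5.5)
Since $v$ is even, $G$ has a subgroup $H$ of index $2$ (an abelian group of even order has a character of order $2$, e.g.\ by Lemma~\ref{lem:character-order} with $m=2$; take $H$ to be its kernel). I will apply Lemma~\ref{lem:support-refined} with this $H$, so $Q=G/H\cong C_2=\{0,1\}$ and $h=v/2$. The quotient image of $D$ is then just the pair of integers $A_0,A_1$. The plan is to show that the quotient equations force $n=k-\lambda$ to be a square of an integer built from $A_0$ and $A_1$.

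The equations from Lemma~\ref{lem:support-refined} read
\[
A_0^2+A_1^2=n+\lambda h,
\qquad
2A_0A_1=\lambda h .
\]
The second equation is \eqref{eq:SR-correlation} with $y=1$. There the sum $\sum_x A_xA_{x-1}$ has two terms, $A_0A_1+A_1A_0$, which is why the factor $2$ appears. Subtracting the second equation from the first gives
\[
(A_0-A_1)^2=n .
\]
Hence $n=k-\lambda$ is the square of the integer $A_0-A_1$. Equivalently, one can apply the nonprincipal order-$2$ character $\chi$: then $\chi(D)=A_0-A_1$ is a rational integer, and Lemma~\ref{lem:character-equation} gives $\chi(D)^2=n$ directly. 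I would mention this as a one-line alternative.

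The only delicate point is bookkeeping in the correlation equation over $C_2$: $-1=1$ in $Q$, so both terms of the sum are $A_0A_1$ and the coefficient of $y=1$ on the left is $2A_0A_1$. The parity and support information ($K_x$) is not needed for this statement. Existence of the index-$2$ subgroup is standard, and I would justify it through Lemma~\ref{lem:character-order}, since $2\mid v$ implies $2\mid\exp(G)$.
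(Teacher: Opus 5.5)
Your proposal is correct and takes essentially the paper's route: choose an index-$2$ subgroup $H$, use the quotient identities $A_0^2+A_1^2=n+\lambda h$ and $2A_0A_1=\lambda h$, and subtract them to get $n=(A_0-A_1)^2$. Applying \eqref{eq:SR-correlation} with the nonzero element $y=1$ of $Q\cong C_2$ is the correct reading of the step the paper writes as ``$y=0$'', and your order-$2$ character shortcut, $\chi(D)=A_0-A_1\in\mathbb Z$ with $\chi(D)^2=n$, is an equally valid one-line argument.
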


\begin{proof}
Since $G$ is abelian and has even order, the structure theorem for
finite abelian groups gives a cyclic factor of even order.  Projecting
onto this factor and then reducing modulo $2$ gives a surjective
homomorphism $G\to C_2$.  Let $H$ be its kernel. Write $G/H=\{0,g\}$, and let $A_0,A_1$ be the corresponding signed intersection
numbers. By Eq. \eqref{eq:quotient-square}, $A_0^2+A_1^2=n+\lambda|H|$. Also, since $x=-x$ in $G/H\cong C_2$, applying Eq. \eqref{eq:SR-correlation} with $y=0$ gives $2A_0A_1=\lambda |H|$. Then we have
\[
n=A_0^2+A_1^2-2A_0A_1=(A_0-A_1)^2.
\]
Since $n=k-\lambda$, the result follows.
\end{proof}

\begin{example}
\label{ex:C2-quotient-database}
After the self-conjugate obstruction of Theorem~\ref{thm:self-conjugate} has been applied, there remain $22{,}951$ open group-specific cases in the signed difference set
database. Among these remaining cases, Theorem~\ref{cor:C2-quotient} rules out $456$ further cases.

For instance, consider the parameter set $\operatorname{SDS}(28,10,2,[2,14])$. Here $G\cong C_2\times C_{14}$ has even order, and $k-\lambda=10-2=8$, which is not a perfect square.  Hence
Theorem~\ref{cor:C2-quotient} excludes this parameter set.
On the other hand, it is not excluded by Theorem~\ref{thm:self-conjugate}: the only prime divisor of $k-\lambda=8$ with odd valuation is $2$, and for the odd divisor $7\mid\exp(G)=14$, the powers of $2$ modulo $7$ cycle through $2,4$ and $1$, so no power is congruent to $-1\pmod7$.
\end{example}

\subsection{A near-full-support parity obstruction}
We now consider SDS whose support misses exactly
one group element, so that \(k=v-1\).  This case has extra information
coming from the location of the unique zero coefficient.  After a
translation, we may put this zero at the identity.  Then, relative to a
subgroup \(H\), the identity coset has one missing support element,
whereas the other cosets are fully supported.  This gives a fixed
parity pattern for the support sizes on the cosets.  Using
\(K_x\equiv A_x\pmod2\), this parity pattern gives a congruence
obstruction modulo \(8\), including cases not detected by the
self-conjugate prime obstruction or the \(C_2\)-quotient obstruction.

\begin{thm}[Near-full-support parity obstruction]
\label{thm:near-full-support}
Let $G$ be a finite abelian group of order $v=qh$, and suppose that $G$ has a subgroup $H$ of odd order $h$. If $G$ has a $(v,v-1,\lambda)$-SDS, then
\begin{equation}
\label{eq:near-full-parity}
v-1+\lambda(h-1)
\equiv
q-1
\quad\text{or}\quad
q+3
\pmod8.
\end{equation}
\end{thm}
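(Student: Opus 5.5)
We translate $D$ so that its unique zero coefficient sits at the identity; this is possible because a translate $gD$ of an SDS is again an SDS with the same parameters. We then apply Lemma~\ref{lem:support-refined} to the subgroup $H$ and the quotient $Q=G/H$, which has $|Q|=q$. The identity coset $C_0=H$ contains exactly one element outside the support, so $K_0=h-1$. Every other coset is fully supported, so $K_x=h$ for $x\neq 0$. Since $h$ is odd, the congruence $K_x\equiv A_x\pmod 2$ shows that $A_0$ is even and that $A_x$ is odd for each of the $q-1$ nonzero $x\in Q$.

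Next we reduce the identity \eqref{eq:quotient-square} modulo $8$. With $n=k-\lambda=v-1-\lambda$ it reads
\[
\sum_{x\in Q}A_x^2=n+\lambda h=v-1+\lambda(h-1).
\]
Each odd square is $\equiv 1\pmod 8$, so the $q-1$ nonzero cosets contribute $q-1$ modulo $8$. Since $A_0$ is even, $A_0^2\equiv 0$ or $4\pmod 8$, according as $4\mid A_0$ or $A_0\equiv 2\pmod 4$. Adding these gives $v-1+\lambda(h-1)\equiv q-1$ or $q+3\pmod 8$, which is \eqref{eq:near-full-parity}.

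The only point needing care is the normalization. Left translation by $g$ gives $(gD)(gD)^{(-1)}=DD^{(-1)}$ in the abelian setting, so the SDS identity is preserved. It also moves the zero coefficient from $g^{-1}$ to the identity, so we may assume $0_G$ is the unique element outside the support. The rest is bookkeeping with Lemma~\ref{lem:support-refined}; the hypothesis that $h$ is odd is what makes the coset parities determined.
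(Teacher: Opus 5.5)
Your proposal is correct and follows the paper's proof essentially step for step. Like the paper, you translate the zero coefficient to $0_G$, read off $K_0=h-1$ and $K_x=h$, use $K_x\equiv A_x\pmod 2$ to fix the parities of the $A_x$, and reduce $\sum_x A_x^2=n+\lambda h=v-1+\lambda(h-1)$ modulo $8$.
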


\begin{proof}
Suppose that $D$ is a \((v,v-1,\lambda)\)-signed difference set. Since $k=v-1$, exactly one element of $G$ has coefficient zero in $D$
. After translating $D$, if necessary, we may assume that the unique zero coefficient occurs at $0_G$.  Indeed, for any $a\in G$, the
translate $aD$ satisfies 
\[
(aD)(aD)^{(-1)}=aDD^{(-1)}a^{-1}=DD^{-1}.
\]
 Thus translating $D$ preserves the signed difference set parameters.

Let $A_x$ and $K_x$, $x\in G/H$, be defined as above. Since the identity coset $H$ contains the unique zero coefficient, $K_0=h-1$. Every other coset contains no zero coefficient and therefore $K_x=h$ for any $x\neq0$. Since $h$ is odd, $K_0=h-1$ is even, whereas $K_x=h$ is odd for every $x\neq0$. By Eq. \eqref{eq:quotient-parity}, it follows that $A_0\equiv0\pmod2$, whereas $A_x\equiv1\pmod2$ for any $x\neq0$. The square of every odd integer is congruent to $1\pmod8$, while the square of every even integer is congruent to either $0$ or
$4\pmod8$. Since there are $q-1$ nonidentity cosets, we obtain
\begin{equation}
\label{eq:near-full-square-mod8}
\sum_{x\in G/H}A_x^2
\equiv
q-1
\quad\text{or}\quad
q+3
\pmod8.
\end{equation}

On the other hand, by Eq. \eqref{eq:quotient-square}, $\sum_{x\in G/H}A_x^2=n+\lambda h$. Since $k=v-1$, $n=k-\lambda=v-1-\lambda$. Thus $n+\lambda h=v-1+\lambda(h-1)$. Combining this identity with Eq. \eqref{eq:near-full-square-mod8} proves Eq. \eqref{eq:near-full-parity}.
\end{proof}

The preceding theorem yields an infinite nonexistence family for
abelian $p$-groups.

\begin{corollary}
\label{cor:abelian-p-group-family}
Let $p$ be a prime satisfying $p\equiv3\pmod4$, and let $m\geq2$ be even. Let $G$ be any finite abelian group of order $p^m$. If $\lambda$ is even, then there is no $(p^m,p^m-1,\lambda)$-SDS in $G$.
\end{corollary}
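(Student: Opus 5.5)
We apply Theorem~\ref{thm:near-full-support} with a well-chosen odd-order subgroup and show that the congruence \eqref{eq:near-full-parity} fails whenever $\lambda$ is even. Since $|G|=p^m$ is odd, every subgroup has odd order. The simplest choice is $H=\{0_G\}$, so $h=1$ and $q=v=p^m$. Then the left side of \eqref{eq:near-full-parity} is $v-1+\lambda\cdot 0=v-1$, and the theorem allows $v-1\equiv v-1$, so this choice gives no contradiction. We therefore need a nontrivial subgroup. A subgroup of every order $p^i$ exists in an abelian $p$-group, so we take $H$ of order $h=p$ and index $q=p^{m-1}$.

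The next step is the computation modulo $8$. Since $p\equiv 3\pmod 4$ and $m$ is even, we have $p^2\equiv 1\pmod 8$, so $v=p^m\equiv 1\pmod 8$ and $v-1\equiv 0\pmod 8$. Also $q=p^{m-1}\equiv p\pmod 8$, because $m-1$ is odd. Hence the allowed residues $q-1$ and $q+3$ are $p-1$ and $p+3$ modulo $8$. The left side is $\lambda(p-1)$. Since $p-1\equiv 2\pmod 4$ and $\lambda$ is even, $\lambda(p-1)\equiv 0\pmod 4$, so the left side lies in $\{0,4\}\pmod 8$.

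On the right, $p\equiv 3\pmod 4$ gives $p-1\equiv 2\pmod 4$ and $p+3\equiv 2\pmod 4$. So both allowed residues are $\equiv 2\pmod 4$, while the left side is $\equiv 0\pmod 4$. This is a contradiction, and therefore no such SDS exists.

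The only delicate point is choosing $h$. With $h=1$ the theorem is vacuous, while with $h=p$ the parity of the exponent $m-1$ makes $q\equiv p\pmod 8$, which forces the mismatch modulo $4$. In fact the argument only needs the information modulo $4$: every odd $A_x^2$ is $\equiv 1\pmod 4$, so $\sum_x A_x^2\equiv q-1\pmod 4$, whereas $n+\lambda h\equiv \lambda(p-1)\equiv 0\pmod 4$.
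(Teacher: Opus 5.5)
Your proof is correct and is essentially the paper's argument: you apply Theorem~\ref{thm:near-full-support} to a subgroup with $\{h,q\}=\{p,p^{m-1}\}$ and reduce modulo $8$, so the left side is $\lambda(p-1)\equiv 0$ or $4\pmod 8$ while the allowed residues are $2$ and $6$. The only difference is that you take $H$ of order $p$ (so $q=p^{m-1}\equiv p\pmod 8$), whereas the paper takes $H$ of index $p$ (so $h-1=p^{m-1}-1\equiv p-1\pmod 8$); the two choices give the identical congruence.
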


\begin{proof}
By the structure theorem for finite abelian groups, we may write
\[
G\cong C_{a_1}\times\cdots\times C_{a_r},
\]
where each $a_i$ is a positive power of $p$.  Since $|G|=p^m$ with $m\ge2$, at least one factor is nontrivial; relabel so that $a_1>1$.  Projecting onto the first factor and then reducing modulo $p$ gives a surjective homomorphism
\[
G\longrightarrow C_{a_1}\longrightarrow C_p.
\]
Let $H$ be its kernel. Therefore $|H|=p^{m-1}$ and $[G:H]=p$. Applying Theorem~\ref{thm:near-full-support} with $q=p$ and $h=p^{m-1}$, 
we obtain the necessary condition
\begin{equation}
\label{eq:p-group-near-full}
p^m-1+\lambda(p^{m-1}-1)
\equiv
p-1
\quad\text{or}\quad
p+3
\pmod8.
\end{equation}

Since $p$ is odd, $p^2\equiv1\pmod8$. As $m$ is even, $p^m\equiv1\pmod8$, while $m-1$ is odd and hence $p^{m-1}\equiv p\pmod8$. Therefore the left-hand side of Eq. \eqref{eq:p-group-near-full} is congruent to $\lambda(p-1)\pmod8$. 

Now $p\equiv3\pmod4$, so either $p\equiv3$ or $7\pmod8$.
 Since $\lambda$ is even, in either case
\[
\lambda(p-1)
\equiv0
\quad\text{or}\quad
4
\pmod8.
\]
On the other hand, $p-1$ and $p+3$ are congruent to $2$ and $6$ modulo $8$, in some order. This contradicts Eq. \eqref{eq:p-group-near-full}.
\end{proof}

\begin{example}
\label{ex:near-full-database}
After the self-conjugate obstruction and the \(C_2\)-quotient
obstruction above have been applied, \(22{,}495\) open cases remain.
Among these remaining cases, Theorem~\ref{thm:near-full-support}
rules out \(29\) further group-specific open cases with
\[
v\in
\{49,69,81,133,165,209,261,273,321,341,469,497\}.
\]
Thus these are genuinely new exclusions at this stage of the
argument. We give three illustrative examples.  

\begin{itemize}
\item The parameter set $(49,48,2)$ is covered directly by Corollary~\ref{cor:abelian-p-group-family}.  Indeed,
\[
49=7^2,\qquad 7\equiv3\pmod4,
\qquad \lambda=2 \text{ is even}.
\]
Hence there is no $(49,48,2)$-SDS in any abelian group of order $49$.

\item The parameter sets $(81,80,4)$ and $(81,80,44)$ are also covered directly by Corollary~\ref{cor:abelian-p-group-family}.  Here
\[
81=3^4,\qquad 3\equiv3\pmod4,
\]
and both $\lambda=4$ and $\lambda=44$ are even. Hence no $(81,80,4)$-SDS or $(81,80,44)$-SDS exists in any abelian group of order $81$.

\item Theorem~\ref{thm:near-full-support} is more general than
Corollary~\ref{cor:abelian-p-group-family}.  For example, consider
the cyclic case $(209,208,12)$ in $C_{209}$.  Since $209=19\cdot 11$, we may take $h=11$ and $q=19$.  Then
\[
v-1+\lambda(h-1)
=
208+12(11-1)
=
328
\equiv0\pmod8.
\]
But
\[
q-1=18\equiv2\pmod8,
\qquad
q+3=22\equiv6\pmod8.
\]
Thus Theorem~\ref{thm:near-full-support} rules out this parameter
set, although it is not covered by the \(p^m\)-group corollary.
\end{itemize}

These examples are new at this stage.  The $C_2$-quotient obstruction does not apply because the group orders are odd, and the
self-conjugate prime obstruction does not exclude the displayed
parameters. Thus the exclusions come from the near-full-support
parity obstruction.
\end{example}

\subsection{A small-defect parity obstruction}
\label{subsec:small-defect}

The near-full-support obstruction treats the case in which the
support of an SDS misses exactly one group element.  For elementary
abelian groups, the same parity argument can be extended when the
number of zero coefficients is small.  This yields the following
general obstruction.

\begin{thm}[Small-defect parity obstruction]
\label{thm:small-defect}
Let $G=C_p^m$, where $p$ is an odd prime, and let $D$ be a $(p^m,k,\lambda)$-SDS in $G$. Put $d=p^m-k$. If $1\le d\le m$, then
\begin{equation}
\label{eq:small-defect}
k+\lambda(p^{m-1}-1)
\equiv
\begin{cases}
p \pmod 8,
   & \text{if \(d\) is even},\\[1mm]
p-1\ \text{or}\ p+3 \pmod 8,
   & \text{if \(d\) is odd}.
\end{cases}
\end{equation}
\end{thm}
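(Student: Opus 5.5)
The plan is to copy the proof of Theorem~\ref{thm:near-full-support}. The new step is to choose an index-$p$ subgroup $H\le G=C_p^m$ so that all $d$ zero coefficients of $D$ land in a single coset of $H$. By Eq.~\eqref{eq:quotient-square}, $\sum_{x\in G/H}A_x^2=n+\lambda p^{m-1}=k+\lambda(p^{m-1}-1)$, so once the parities of the $A_x$ are known, the congruence modulo $8$ follows from the same square-counting as before.

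\textbf{Choosing $H$.} Let $Z=\{z_1,\dots,z_d\}$ be the set of elements with zero coefficient. By the translation argument in the proof of Theorem~\ref{thm:near-full-support}, we may assume $z_1=0_G$. View $G$ as the vector space $\mathbb F_p^m$. The differences $z_i-z_1=z_i$ for $2\le i\le d$ span a subspace $W$ of dimension at most $d-1\le m-1$, so $W$ lies in some hyperplane $H$ of dimension $m-1$. This is exactly where the hypothesis $d\le m$ is used. Then $H$ has index $p$ and order $h=p^{m-1}$, which is odd, and $Z\subseteq H$, so all zeros lie in the identity coset. Hence $K_0=h-d$, while $K_x=h$ is odd for each of the $q-1=p-1$ nonidentity cosets.

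\textbf{Parity and the square sum.} By Eq.~\eqref{eq:quotient-parity}, $A_x$ is odd for every $x\ne0$, and $A_0\equiv h-d\equiv 1+d\pmod 2$. The $p-1$ odd squares contribute $p-1\pmod 8$.
\begin{itemize}
\item If $d$ is even, $A_0$ is odd, so $A_0^2\equiv1$ and the sum is $\equiv p\pmod8$.
\item If $d$ is odd, $A_0$ is even, so $A_0^2\equiv0$ or $4$ and the sum is $\equiv p-1$ or $p+3\pmod 8$.
\end{itemize}
Equating with $k+\lambda(p^{m-1}-1)$ gives Eq.~\eqref{eq:small-defect}.

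\textbf{Main obstacle.} The only nonroutine step is placing all zeros in one coset of an index-$p$ subgroup. Translating by $-z_1$ and then using the linear-algebra bound $\dim W\le d-1<m$ to find a containing hyperplane handles this cleanly. This step needs $G$ to be elementary abelian, so that subgroups are subspaces and any proper subspace lies in a hyperplane. The rest is the parity bookkeeping already used in Theorem~\ref{thm:near-full-support}.
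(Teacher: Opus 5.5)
Your proposal is correct and matches the paper's proof step for step. You translate one zero coefficient to $0_G$, place the remaining $d-1\le m-1$ zeros in a hyperplane $H$, read off the parities of the $A_x$ from $K_0=p^{m-1}-d$ and $K_x=p^{m-1}$, and compare with $\sum_x A_x^2=n+\lambda p^{m-1}=k+\lambda(p^{m-1}-1)$ modulo $8$.
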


\begin{proof}
Since $d=p^m-k$, the signed difference set $D$ has exactly $d$ zero coefficients. After translating $D$, if necessary, we may assume that one of
the zero coefficients occurs at $0_G$. Let $z_1,\ldots,z_{d-1}$ be the remaining zero elements.  Since $G=C_p^m$ is an $m$-dimensional vector space over $\mathbb F_p$, the subspace $\langle z_1,\ldots,z_{d-1}\rangle$ has dimension at most $d-1\le m-1$. Then the subspace $\langle z_1,\ldots,z_{d-1}\rangle$ is contained in an $(m-1)$-dimensional subspace $H$ of $G$.
Thus $H$ is a hyperplane containing $0_G,z_1,\ldots,z_{d-1}$, and hence all $d$ zero coefficients of $D$.

Since $|H|=p^{m-1}$, $G/H\cong C_p$. 
Let $A_0,\ldots,A_{p-1}$ and $K_0,\ldots,K_{p-1}$ be the signed intersection numbers and support sizes defined in Subsection~\ref{subsetction4.1},
with $H$ taken as the identity coset. Since all $d$ zero coefficients lie in $H$, we have $K_0=p^{m-1}-d$, 
whereas every other coset contains no zero coefficient, and hence $K_j=p^{m-1}$ for any $1\le j\le p-1$. 

Since $p$ is odd, $p^{m-1}$ is odd. Suppose first that $d$ is even. Then $p^{m-1}-d$ is also odd. Hence all the support sizes $K_j$ are odd.
By Eq. \eqref{eq:quotient-parity}, all the signed intersection numbers
$A_j$ are odd. Therefore $A_j^2\equiv1\pmod8$ for every $j$, and consequently 
\[
\sum_{j=0}^{p-1}A_j^2\equiv p\pmod8.
\]
Suppose next that $d$ is odd.  Then $K_0=p^{m-1}-d$ is even, while $K_j=p^{m-1}$ is odd for $1\le j\le p-1$. Thus, by Eq.~\eqref{eq:quotient-parity}, $A_0\equiv0\pmod2$ and $A_j\equiv1\pmod2$ for any $1\le j\le p-1$. It follows that $A_0^2\equiv0\ \text{or}\ 4\pmod8$, 
whereas $A_j^2\equiv1\pmod8$ for any $1\le j\le p-1$. Hence
\[
\sum_{j=0}^{p-1}A_j^2
\equiv
p-1
\quad\text{or}\quad
p+3
\pmod8.
\]
Finally, by Eq.~\eqref{eq:quotient-square}, $\sum_{j=0}^{p-1}A_j^2=n+\lambda p^{m-1}$. Since $n=k-\lambda$, the desired congruence follows.
\end{proof}

\begin{example}
\label{ex:small-defect-database}
After the self-conjugate obstruction, the $C_2$-quotient
obstruction, and the near-full-support obstruction have been applied, there remain $22{,}466$ open group-specific cases in the signed difference set database.  Theorem~\ref{thm:small-defect} rules out $4$ further cases, namely
\[
(27,25,16,[3,3,3])
\]
and
\[
(343,340,2,[7,7,7]),\quad
(343,340,72,[7,7,7]),\quad
(343,340,98,[7,7,7]).
\]
In the first case, $p=3$, $m=3$ and $d=27-25=2$. Since $d$ is even, Theorem~\ref{thm:small-defect} requires $k-\lambda+\lambda p^{m-1}
\equiv p\pmod8$. However, $25-16+16\cdot3^2=153\equiv1\pmod8$, 
whereas $p=3$.  Hence no $(27,25,16)$-SDS exists in $C_3^3$. For the three cases in $C_7^3$, $p=7$, $m=3$ and $d=343-340=3$. 
Since $d$ is odd, the allowed residues are $p-1\equiv6\pmod8$ or $p+3\equiv2\pmod8$. But for each of $\lambda=2,72,98$, $340-\lambda+\lambda7^2
=340+48\lambda
\equiv4\pmod8$. Thus all three cases are excluded by Theorem~\ref{thm:small-defect}.

These are new exclusions at this stage.  The $C_2$-quotient
obstruction does not apply, because the group orders $27$ and $343$ are odd.  The near-full-support obstruction does not apply
either, since here $27-25=2$ and $343-340=3$, so the support misses more than one group element. They are also not excluded by the self-conjugate prime obstruction.
For $(27,25,16,[3,3,3])$, one has $k-\lambda=25-16=9=3^2$, so the only prime divisor of $k-\lambda$ has even valuation. For the three cases in $C_7^3$, the values of $k-\lambda$ are $338=2\cdot13^2$, $268=4\cdot67$ and $242=2\cdot11^2$. 
The prime divisors occurring to odd exponent are congruent to $2$ or $4$ modulo $7$, and their powers modulo $7$ cycle through $2,4$ and $1$. Thus no such power is congruent to \(-1\equiv6\pmod7\), so the
self-conjugate obstruction does not exclude these cases.
\end{example}

\begin{remark}
\label{rem:small-defect-condition}
The hypothesis $d\le m$ is used only to ensure that, after translating $D$, all zero coefficients can be placed inside a hyperplane. The proof actually applies whenever the set of zero coefficients is contained in a coset of an index-$p$ subgroup.
\end{remark}

\section{Constructions of SDS}\label{Sec:construcionofSDS}
\subsection{A PCP-derived construction from a Desarguesian spread}

We first recall the Desarguesian spread construction and the associated PDSs of partial congruence partition type. Let $p$ be a prime and $m$ be a positive integer. Regard $V=\mathbb F_{p^m}^2$ as a $2$-dimensional vector space over $\mathbb F_{p^m}$.

\begin{definition}[Desarguesian $m$-spread]
\label{def:Desarguesian-spread}
Define 
\[
\mathcal S
=
\bigl\{
U_a : a\in\mathbb F_{p^m}
\bigr\}
\cup
\{U_\infty\},
\]
where
\[
U_a
=
\{(x,ax):x\in\mathbb F_{p^m}\},
\qquad
U_\infty
=
\{(0,x):x\in\mathbb F_{p^m}\}.
\]
Then $\mathcal S$ is called the Desarguesian $m$-spread of $V$. Each member of $\mathcal S$ has order $p^m$. Moreover, $U\cap W=\{0\}$ for all distinct $U,W\in\mathcal S$ and $V=\{0\}\cup_{U\in \mathcal S} (U\setminus \{0\})$. In particular, $|\mathcal S|=p^m+1$.
\end{definition}

The following construction is a standard source of PCP-type regular
PDSs; see Ma~\cite{S.L.MA} for background on PDSs of this type. 

\begin{proposition}[PCP-type PDSs from a Desarguesian spread]
\label{prop:spread-PCP-PDS}
Let $\mathcal S$ be the Desarguesian $m$-spread of $V=\mathbb F_{p^m}^2$, 
and let $A\subseteq\mathcal S$ satisfy $|A|=\ell$. Define $P=\bigcup_{U\in A}\bigl(U\setminus\{0\}\bigr)$. Then $P$ is an $\ell$-PCP type regular PDS in the additive group of $V$, with parameters
\[
\left(
p^{2m},\,
\ell(p^m-1),\,
p^m-2+(\ell-1)(\ell-2),\,
\ell(\ell-1)
\right).
\]
\end{proposition}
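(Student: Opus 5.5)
We will verify the PDS property through characters of the additive group of $V$, using the standard criterion: a subset $P$ with $0\notin P$ and $P=-P$ is a $(v,k,\lambda,\mu)$-regular PDS exactly when $\chi(P)\in\{\beta_1,\beta_2\}$ for every nonprincipal character $\chi$. Here $\beta_{1,2}$ are the roots of
\[
X^2-(\lambda-\mu)X-(k-\mu)=0.
\]
The partition of characters into the two eigenvalue classes is arbitrary. This follows by applying the characters to \eqref{Eqn_PDS} and invoking Fourier inversion. Regularity itself is immediate. We have $0\notin P$ by construction. Each $U\in\mathcal S$ is an additive subgroup, indeed an $\mathbb F_{p^m}$-subspace, so $U\setminus\{0\}$ is closed under negation, and therefore $-P=P$. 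The size is $|P|=\ell(p^m-1)$, since the sets $U\setminus\{0\}$ for distinct spread members are disjoint by Definition~\ref{def:Desarguesian-spread}.

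Next we compute $\chi(P)$. For a nonprincipal character $\chi$ of $(V,+)$ and a subgroup $U$ of order $p^m$, orthogonality gives
\[
\chi(U)=p^m\text{ if }U\subseteq\ker\chi,\qquad \chi(U)=0 \text{ otherwise}.
\]
Hence $\chi(U\setminus\{0\})$ equals $p^m-1$ or $-1$. The key combinatorial fact is that $\ker\chi$ contains exactly one member of $\mathcal S$. To see this, note that $\ker\chi$ is an index-$p$ subgroup, so $|\ker\chi|=p^{2m-1}$. The sets $U\setminus\{0\}$ partition $V\setminus\{0\}$, and each member of $\mathcal S$ meets $\ker\chi$ in either $p^m$ or $p^{m-1}$ elements. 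Let $t$ be the number of members contained in $\ker\chi$. Counting the nonzero elements of $\ker\chi$ gives
\[
t(p^m-1)+(p^m+1-t)(p^{m-1}-1)=p^{2m-1}-1.
\]
Solving this yields $t=1$. This count is the main point needing care. Alternatively, one can identify the characters with $(x,y)\mapsto \psi(\mathrm{Tr}(ux+wy))$; the kernel then contains exactly the spread line orthogonal to $(u,w)$ under the trace form.

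It follows that
\[
\chi(P)=(p^m-1)-(\ell-1)=p^m-\ell
\]
when the unique spread member in $\ker\chi$ lies in $A$, and
\[
\chi(P)=-\ell
\]
otherwise. Finally, we check that $p^m-\ell$ and $-\ell$ are the roots of the quadratic above with the stated $k,\lambda,\mu$. Their sum is $p^m-2\ell$. This should equal
\[
\lambda-\mu=p^m-2+(\ell-1)(\ell-2)-\ell(\ell-1)=p^m-2\ell.
\]
Their product is $-\ell(p^m-\ell)$. This should equal
\[
-(k-\mu)=-(\ell p^m-\ell-\ell^2+\ell)=-\ell(p^m-\ell).
\]
Both identities hold, and together with the criterion this gives the claimed parameters. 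The only step beyond routine algebra is the count $t=1$, and the displayed counting identity settles it directly.
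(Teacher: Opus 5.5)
The paper gives no proof of this proposition. It quotes the result as standard and points to Ma's survey. Your character-theoretic argument therefore supplies a self-contained verification that the paper omits, and it is correct. The count giving $t=1$ is right: with $q=p^m$ and $r=p^{m-1}$ it reduces to $t(q-r)=q-r$. The values $p^m-\ell$ and $-\ell$ are indeed the two roots of $X^2-(\lambda-\mu)X-(k-\mu)$.

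Two small remarks.

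\textbf{The principal character.} The converse direction of your criterion, via Fourier inversion, also needs the principal-character identity $k^2=(k-\mu)+(\lambda-\mu)k+\mu v$. This holds here, since both sides equal $\ell^2(p^m-1)^2$. In fact it is forced once $|P|=k$, $0\notin P$ and $P=-P$. Every nonprincipal character annihilates $P^2-(\lambda-\mu)P-(k-\mu)0_G$, so this element is a constant multiple of $G$. Its $0_G$-coefficient is $k-(k-\mu)=\mu$, so the constant is $\mu$.

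\textbf{Only ``at most one'' is needed.} You do not need that $\ker\chi$ contains exactly one spread member. It suffices that at most one member of $A$ lies in $\ker\chi$, and this is immediate. Two distinct members $U,W$ satisfy $|U+W|=p^{2m}$, so $U+W=V\not\subseteq\ker\chi$. This avoids the counting identity altogether. It also shows the conclusion holds for any partial congruence partition in an abelian group of order $n^2$, not just for subsets of a Desarguesian spread, which is the natural generality of PCP-type partial difference sets.
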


We now use a particular member of this family of PCP-type PDSs to construct a SDS.

\begin{thm}[A PCP-derived spread construction of SDSs]
\label{thm:PCP-derived-SDS}
Let $V=\mathbb F_{2^m}^2$ and $G=(V,+)$ be an additive abelian group. Let $\mathcal S$ be the Desarguesian spread of $G$, and set $\ell=2^{m-1}-1$. Take $\mathcal A\subseteq \mathcal S$ and let $P=\cup_{U\in \mathcal A}(U\setminus \{0\})$ be an $\ell$-PCP type regular partial difference set in $G$. Define $D=P-0_G$. Then $D$ is a SDS in $G$ with parameters
\[
\left(
2^{2m},\,
2^{2m-1}-3\cdot 2^{m-1}+2,\,
2^{2m-2}-3\cdot 2^{m-1}+2
\right).
\]
\end{thm}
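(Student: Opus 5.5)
The plan is to verify the defining identity \eqref{Eqn_SDS} directly in $\mathbb Z[G]$, using the PDS equation \eqref{Eqn_PDS} for $P$ supplied by Proposition~\ref{prop:spread-PCP-PDS}. We assume $m\ge 2$, so that $\ell=2^{m-1}-1\ge 1$ and a subset $\mathcal A\subseteq\mathcal S$ with $|\mathcal A|=\ell$ exists. Since $G$ is an elementary abelian $2$-group, every element is its own inverse, so $X^{(-1)}=X$ for every $X\in\mathbb Z[G]$. Also $0_G\notin P$ and $P\cap\{0_G\}=\varnothing$, so $D=P-0_G$ is a genuine signed subset. Its positive part is $P$, its negative part is $\{0_G\}$, and its support size is $k=|P|+1$.

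First, rewrite the PDS identity in a convenient form. Let $(v,k',\lambda',\mu')$ be the parameters of $P$ from Proposition~\ref{prop:spread-PCP-PDS}. Expanding $\mu'(G-P)$ in \eqref{Eqn_PDS} gives
\[
P^2=(k'-\mu')0_G+(\lambda'-\mu')P+\mu' G .
\]
Next, expand the square of $D$:
\[
DD^{(-1)}=(P-0_G)^2=P^2-2P+0_G=(k'-\mu'+1)0_G+(\lambda'-\mu'-2)P+\mu' G .
\]
For $D$ to be an SDS, the coefficient of $P$ must vanish; this is the one genuine condition to check. From the parameters in Proposition~\ref{prop:spread-PCP-PDS},
\[
\lambda'-\mu'=2^m-2+(\ell-1)(\ell-2)-\ell(\ell-1)=2^m-2\ell .
\]
With $\ell=2^{m-1}-1$ this equals $2$, so $\lambda'-\mu'-2=0$. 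This is exactly why the value $\ell=2^{m-1}-1$ is chosen.

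It then follows that $DD^{(-1)}=(k'-\mu'+1)0_G+\mu' G$, so $D$ is an SDS with $\lambda=\mu'$. The remaining step is routine arithmetic:
\[
\lambda=\ell(\ell-1)=(2^{m-1}-1)(2^{m-1}-2)=2^{2m-2}-3\cdot 2^{m-1}+2,
\]
\[
k=\ell(2^m-1)+1=2^{2m-1}-3\cdot 2^{m-1}+2 .
\]
As a consistency check, $k-\lambda=2^{2m-2}$ should agree with $k'-\mu'+1$, and the principal-character relation \eqref{eq:principal-character} should hold with $s=|P|-1$. Both checks are straightforward.

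There is no real obstacle in this argument. The only delicate point is the bookkeeping of the $0_G$ coefficient in the PDS equation: $G-P$ contains $0_G$, which is why $P^2$ has coefficient $k'$ at $0_G$.
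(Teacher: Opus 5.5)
Your proposal is correct and follows essentially the same route as the paper: expand $(P-0_G)(P^{(-1)}-0_G)=P^2-2P+0_G$ using the PDS identity, note that $\lambda_P-\mu_P=2^m-2\ell=2$ cancels the $P$-term, and read off $k=k_P+1$ and $\lambda=\mu_P$. The only cosmetic difference is how $P^{(-1)}=P$ is justified: you use that $G$ has exponent $2$, whereas the paper uses that each spread member is a subgroup.
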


\begin{proof}
By Proposition~\ref{prop:spread-PCP-PDS}, the set \(P\) is an
\(\ell\)-PCP type regular partial difference set with parameters $\left(
2^{2m},k_P,\lambda_P,\mu_P
\right)$, where $k_P=\ell(2^m-1)$, $\lambda_P=2^m-2+(\ell-1)(\ell-2)$ and $\mu_P=\ell(\ell-1)$. It follows that $\lambda_P-\mu_P=2^m-2\ell$. For $\ell=2^{m-1}-1$, the parameters $\lambda_P-\mu_P=2$. Since every $U\in\mathcal S$ is a subgroup of $G$, the set $U\setminus\{0\}$ is closed under taking inverses. Hence $P^{(-1)}=P$. Since $P$ is a regular PDS with parameters $\left(
2^{2m},k_P,\lambda_P,\mu_P
\right)$, we have $P^2=
(k_P-\mu_P)0_G+(\lambda_P-\mu_P)P+\mu_PG$. For $D=P-0_G$, we therefore obtain 
\[
\begin{aligned}
DD^{(-1)}
&=(P-0_G)(P^{(-1)}-0_G)\\
&=PP^{(-1)}-P-P^{(-1)}+0_G\\
&=P^2-2P+0_G\\
&=(k_P-\mu_P+1)0_G+\mu_PG.
\end{aligned}
\]
Thus $D$ satisfies the defining group-ring identity of a $(v,k,\lambda)$-SDS with $k=k_P+1$ and $\lambda=\mu_P$. 
We calculate 
\[
k=k_P+1=(2^{m-1}-1)(2^m-1)+1=2^{2m-1}-3\cdot 2^{m-1}+2
\]
and
\[
\lambda=\mu_P=(2^{m-1}-1)(2^{m-1}-2)=2^{2m-2}-3\cdot 2^{m-1}+2.
\]
\end{proof}

\begin{example}
\label{ex:PCP-derived-database}
Among the open group-specific cases in the SDS database with $4\leq v\leq499$, Theorem~\ref{thm:PCP-derived-SDS}
turns two previously open cases into existence cases. Taking $m=3$, we obtain $(64,22,6)$-SDS in $(\bF_8^2,+)\cong C_2^6$. Thus $\operatorname{SDS}(64,22,6,[2,2,2,2,2,2])$ exists. Taking $m=4$, the same construction gives $(256,106,42)$ in $(\mathbb F_{16}^2,+)\cong C_2^8$. Hence $\operatorname{SDS}(256,106,42,[2,2,2,2,2,2,2,2])$ exists.
\end{example}

\subsection{A \texorpdfstring{$(125,28,3)$}{(125,28,3)} SDS in  \texorpdfstring{$C_5^3$}{C5^3}}
\label{sec:125-construction}

In this section, we give an explicit construction of a $(125,28,3)$-SDS in the elementary abelian group $C_5^3$. The construction is based on quartic multiplicative
characters of $\mathbb F_{25}$ and $\mathbb F_5$. 

Throughout this section, multiplicative characters are extended to the whole finite field by setting their value at $0$ equal to $0$. The polynomial $f(x)=x^2+3$ is irreducible over $\bF_5$. Let $\alpha$ be a root of $f(x)$, and identify $\bF_{25}=\bF_5[\alpha]$. Thus $\alpha^2=-3=2$ in $\bF_5$. Let $\gamma=1+2\alpha$. It is easy to verify that $\gamma$ has multiplicative order $24$, and hence $\gamma$ is a primitive element of $\bF_{25}$. Let
\[
\rho:\mathbb F_{25}^*\longrightarrow
\{1,i,-1,-i\}
\]
be the quartic multiplicative character determined by $\rho(\gamma)=i$ over $\bF_{25}$. Since $2$ generates $\mathbb F_5^*$, let 
\[
\sigma:\mathbb F_5^*\longrightarrow
\{1,i,-1,-i\}
\]
be the quartic multiplicative character determined by $\sigma(2)=-i$ over $\bF_5$. For multiplicative characters $\chi,\psi$ of $\mathbb F_q$,
extended to $0$ by $\chi(0)=\psi(0)=0$, the Jacobi sum is
\[
J(\chi,\psi)
=
\sum_{x\in\mathbb F_q}\chi(x)\psi(1-x).
\]
With this notation, $J(\rho,\rho)$ is taken over $\mathbb F_{25}$, whereas $J(\sigma,\sigma)$ is taken over $\mathbb F_5$.

We first record the quartic Jacobi sums associated with $\rho$ and $\sigma$. 

\begin{lemma}
\label{lem:quartic-jacobi-25-5}
With the above choices of \(\rho\) and \(\sigma\), one has
\[
J(\rho,\rho)=3+4i
\]
and
\[
J(\sigma,\sigma)=-1+2i.
\]
Consequently,
\[
J(\rho,\rho)\,
\overline{J(\sigma,\sigma)}
=
5-10i,
\]
and in particular
\[
\operatorname{Re}
\left(
J(\rho,\rho)\,
\overline{J(\sigma,\sigma)}
\right)
=5.
\]
\end{lemma}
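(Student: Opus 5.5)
The plan is to compute both Jacobi sums explicitly, using the definition for the small field $\mathbb F_5$ and reducing the computation over $\mathbb F_{25}$ to $\mathbb F_5$ via the norm map and the Hasse--Davenport relation. The final product is then a one-line multiplication in $\mathbb Z[i]$.

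\emph{Step 1: $J(\sigma,\sigma)$ over $\mathbb F_5$.} From $\sigma(2)=-i$ we tabulate
\[
\sigma(1)=1,\qquad \sigma(2)=-i,\qquad \sigma(4)=\sigma(2)^2=-1,\qquad \sigma(3)=\sigma(2)^3=i .
\]
Only $x\in\{2,3,4\}$ contribute to $J(\sigma,\sigma)=\sum_x\sigma(x)\sigma(1-x)$, since the terms with $x=0$ or $x=1$ vanish. The three terms are
\[
\sigma(2)\sigma(4)=i,\qquad \sigma(3)\sigma(3)=-1,\qquad \sigma(4)\sigma(2)=i .
\]
Hence $J(\sigma,\sigma)=-1+2i$.

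\emph{Step 2: $J(\rho,\rho)$ over $\mathbb F_{25}$.} Since $4\mid 5-1$ and the norm $N:\mathbb F_{25}^*\to\mathbb F_5^*$, $N(x)=x^6$, is surjective, every character of $\mathbb F_{25}^*$ of order dividing $4$ has the form $\tau\circ N$ for a quartic character $\tau$ of $\mathbb F_5$.
\begin{itemize}
\item First identify $\tau$. The element $\gamma^6=N(\gamma)$ generates $\mathbb F_5^*$, and $\tau(\gamma^6)=\rho(\gamma)=i$. One computes $\gamma^6=(1+2\alpha)^6$ in $\mathbb F_5$ using $\alpha^2=2$, and then $\tau$ is determined. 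I expect $\tau=\bar\sigma$ or $\tau=\sigma$ according to whether $\gamma^6$ equals $2$ or $3$.
\item Next apply Hasse--Davenport in the form $-g(\tau\circ N)=(-g(\tau))^2$ to $\tau$ and to $\tau^2$. Together with $J(\chi,\chi)=g(\chi)^2/g(\chi^2)$ for $\chi^2\neq\chi_0$, this gives
\[
J(\rho,\rho)=-J(\tau,\tau)^2 .
\]
\item Finally, $J(\tau,\tau)=-1\pm 2i$ by Step 1 or its complex conjugate. Squaring gives $-3\mp 4i$, so $J(\rho,\rho)=3\pm 4i$, and the sign is fixed by the choice of $\tau$.
\end{itemize}

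The main obstacle is the sign bookkeeping in Step 2: the identification of $\tau$, and the sign conventions in Hasse--Davenport with the extension $\chi(0)=0$. As a safeguard, I would verify $J(\rho,\rho)=3+4i$ directly. This means listing $\gamma^j$ for $0\le j\le 23$ as $a+b\alpha$, which gives a discrete-log table. For each $x\ne 0,1$ one then reads off $\log x$ and $\log(1-x)$ and sums $i^{\log x+\log(1-x)}$. Two checks are available on this computation: $|J|^2=25$, and $J\equiv -1\pmod{2+2i}$, the standard congruence for quartic Jacobi sums.

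\emph{Step 3: the product.} We have $\overline{J(\sigma,\sigma)}=-1-2i$, and
\[
(3+4i)(-1-2i)=-3-6i-4i+8=5-10i .
\]
Its real part is $5$, as claimed.
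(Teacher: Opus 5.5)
Your proposal is correct, but it reaches $J(\rho,\rho)$ by a different route from the paper. The paper tabulates directly over $\mathbb F_{25}$ how often $\rho(x)\rho(1-x)$ takes each value in $\{0,\pm1,\pm i\}$ (multiplicities $2,7,4,8,4$) and reads off $3+4i$. Your fallback discrete-log check is exactly the paper's method, and your $\mathbb F_5$ computation coincides with it.

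Your main route lifts from $\mathbb F_5$ instead, and the step you left as ``one computes'' is the decisive one. It is immediate: in $\mathbb F_5$,
$\gamma^6=N(\gamma)=(1+2\alpha)(1-2\alpha)=1-4\alpha^2=1-8=3$, and $\sigma(3)=\sigma(2)^3=i=\rho(\gamma)$. Hence $\tau=\sigma$ and $\rho=\sigma\circ N$. Hasse--Davenport with $s=2$ gives $g(\chi\circ N)=-g(\chi)^2$ for $\chi=\sigma,\sigma^2$. Combined with $J(\chi,\chi)=g(\chi)^2/g(\chi^2)$, this yields $J(\rho,\rho)=-J(\sigma,\sigma)^2=-(-3-4i)=3+4i$, so your sign bookkeeping is right.

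Do not treat that norm computation as a formality. Neither $|J|^2=25$ nor the congruence $J\equiv-1\pmod{2+2i}$ distinguishes $3+4i$ from $3-4i$. That sign is exactly what matters downstream: with $3-4i$ the product would be $-11-2i$, and the argument in Theorem~\ref{thm:125-28-3} would fail.

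Your route replaces a $23$-term computation in $\mathbb F_{25}$ by three terms in $\mathbb F_5$ plus one norm. It also explains the lemma structurally. Since $\rho$ is the lift of $\sigma$,
$J(\rho,\rho)\overline{J(\sigma,\sigma)}=-|J(\sigma,\sigma)|^2J(\sigma,\sigma)=-5J(\sigma,\sigma)=5-10i$.
So the real part $5$ is just $-5\operatorname{Re}J(\sigma,\sigma)$, which shows why the paired choices $\rho(\gamma)=i$ and $\sigma(2)=-i$ work. The paper's brute-force route needs no lifting theorem but gives no such explanation.
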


\begin{proof}
For the above choices of quartic characters, a direct computation gives
the following multiplicities:
\[
\begin{array}{c|ccccc}
z & 0 & 1 & -1 & i & -i\\
\hline
\#\{x\in\mathbb F_{25}:
\rho(x)\rho(1-x)=z\}
&2&7&4&8&4\\[1mm]
\#\{x\in\mathbb F_{5}:
\sigma(x)\sigma(1-x)=z\}
&2&0&1&2&0.
\end{array}
\]
Therefore,
\[
J(\rho,\rho)
=
7-4+(8-4)i
=
3+4i,
\]
whereas
\[
J(\sigma,\sigma)
=
-1+2i.
\]
It follows that
\[
\begin{aligned}
J(\rho,\rho)\,
\overline{J(\sigma,\sigma)}
&=
(3+4i)(-1-2i)\\
&=
5-10i.
\end{aligned}
\]
The desired real-part identity follows immediately.
\end{proof}

We now define the positive and negative supports of our signed
difference set. Let $G=(\mathbb F_{25},+)\times(\mathbb F_5,+)$.
 As an additive group, $(\mathbb F_{25},+)\cong C_5^2$ and $(\mathbb F_5,+)\cong C_5$. Hence $G\cong C_5^3$. Define
\[
\begin{aligned}
   P&=
\left\{
(x,y)\in
\mathbb F_{25}^*\times\mathbb F_5^*:
\rho(x)=\sigma(y)
\right\},\\
N&=\{0\}\times\mathbb F_5^*,
\end{aligned}
\]
and $D=P-N$. Clearly, $P\cap N=\varnothing$. 

\begin{thm}
\label{thm:125-28-3}
The signed set $D$ defined above is a $(125,28,3)$-SDS in $G\cong C_5^3$.
\end{thm}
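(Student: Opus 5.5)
The plan is to verify the SDS identity through characters. Applying the principal character gives $s^2=k+\lambda(v-1)$, and Lemma~\ref{lem:character-equation} gives $|\chi(D)|^2=n$ for every nonprincipal $\chi$. Conversely, these conditions determine $DD^{(-1)}$ by Fourier inversion on the abelian group $G$. So it suffices to check the following two things.

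\emph{Principal character.} Since $\rho$ maps $\mathbb F_{25}^*$ onto the fourth roots of unity with fibres of size $6$, each $y\in\mathbb F_5^*$ has exactly $6$ partners $x$ with $\rho(x)=\sigma(y)$. Hence $|P|=24$ and $|N|=4$, so $k=28$ and $s=20$. Then $s^2=400=28+3\cdot124$, which confirms $\lambda=3$ and $n=25$.

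\emph{Nonprincipal characters.} These are $\chi_{a,b}(x,y)=\psi_{25}(ax)\psi_5(by)$ with $(a,b)\ne(0,0)$, where $\psi_{25},\psi_5$ are the canonical additive characters. We must show $|\chi_{a,b}(D)|^2=25$.

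To compute $\chi_{a,b}(P)$, I would write the indicator of $\rho(x)=\sigma(y)$ on $\mathbb F_{25}^*\times\mathbb F_5^*$ as $\tfrac14\sum_{j=0}^{3}\rho^j(x)\overline{\sigma}^{\,j}(y)$. This factors the sum into products of Gauss sums $g_{25}(\rho^j)$ and $g_5(\overline\sigma^{\,j})$, twisted by $\overline{\rho}^{\,j}(a)\sigma^j(b)$, with the usual conventions: the $j=0$ sums are $q-1$ or $-1$, and a sum with $j\neq0$ vanishes when its argument is $0$.

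The degenerate cases are then immediate.
\begin{itemize}
\item If $a\ne0$ and $b=0$: $\chi(P)=\tfrac14(-1)(4)=-1$ and $\chi(N)=4$, so $\chi(D)=-5$.
\item If $a=0$ and $b\ne0$: $\chi(P)=\tfrac14(24)(-1)=-6$ and $\chi(N)=-1$, so $\chi(D)=-5$.
\end{itemize}

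The main case is $a,b\ne0$. Put $c=\overline{\rho}(a)\sigma(b)\in\{\pm1,\pm i\}$ and $G_j=g_{25}(\rho^j)g_5(\overline\sigma^{\,j})$. Then
\[
\chi(D)=\tfrac54+\tfrac14\left(cG_1+c^2G_2+c^3G_3\right).
\]
I would simplify this in three steps.
\begin{enumerate}
\item By $g(\overline\chi)=\chi(-1)\overline{g(\chi)}$, together with $\rho(-1)=\rho(\gamma^{12})=1$ and $\sigma(-1)=\sigma(2)^2=-1$, I expect $G_3=-\overline{G_1}$. Hence $cG_1+c^3G_3=2i\,\mathrm{Im}(cG_1)$.
\item $G_2$ is a product of quadratic Gauss sums. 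By the Stickelberger/Davenport--Hasse evaluation, $g_{25}(\eta)=-5$ and $g_5(\eta)=\sqrt5$, so $G_2=-5\sqrt5$.
\item To control $G_1$, I would use $g(\chi)^2=J(\chi,\chi)\,g(\chi^2)$ on both fields. This gives
\[
G_1^2=J(\rho,\rho)\,g_{25}(\rho^2)\cdot\overline{J(\sigma,\sigma)}\,\overline{g_5(\sigma^2)}\,\sigma(-1)\ \text{(up to the conjugation signs just recorded)},
\]
which reduces $G_1^2$ to a rational multiple of $\sqrt5\,J(\rho,\rho)\overline{J(\sigma,\sigma)}=\sqrt5(5-10i)$ by Lemma~\ref{lem:quartic-jacobi-25-5}.
\end{enumerate}
From this I would extract $G_1$ up to sign and then $\mathrm{Im}(cG_1)$ for each of the four values of $c$.

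The main obstacle is this last step. The irrational contributions $c^2G_2/4=\mp5\sqrt5/4$ must combine with $\mathrm{Im}(cG_1)$ so that $|\chi(D)|^2=25$ exactly, and the sign ambiguity of $G_1$ has to be resolved. The real-part identity $\mathrm{Re}(J(\rho,\rho)\overline{J(\sigma,\sigma)})=5$ from Lemma~\ref{lem:quartic-jacobi-25-5} is presumably what makes the cross terms collapse. I would first try expanding $|\chi(D)|^2$ directly as
\[
\tfrac1{16}\left|5+c^2G_2+2i\,\mathrm{Im}(cG_1)\right|^2,
\]
using $|G_1|^2=125$ to reduce everything to $\mathrm{Re}(c^2G_1^2)$ and the cross term. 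If the sign of $G_1$ cannot be pinned down this way, I would fall back on evaluating $g_5(\sigma)$ numerically via $\sigma(2)=-i$, and $g_{25}(\rho)$ via Davenport--Hasse from a quartic character on $\mathbb F_5$ compatible with $\rho(\gamma)=i$, and then check the four cases $c=\pm1,\pm i$ explicitly.
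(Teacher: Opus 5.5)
Your route is the paper's: the character criterion, the orthogonality expansion of $1_P$ into products of Gauss sums, $G_3=-\overline{G_1}$, $G_2$ real with $G_2^2=125$, and $G_1^2$ tied to $J(\rho,\rho)\overline{J(\sigma,\sigma)}$ through $g(\chi)^2=J(\chi,\chi)g(\chi^2)$. Your degenerate cases, the principal-character check $s^2=400=28+3\cdot124$, and steps 1 and 2 are correct. However, you leave the decisive computation unfinished, and you misplace the difficulty. The sign of $G_1$ is irrelevant. Since $c^2G_2$ is real,
\[
16|\chi(D)|^2=(5+c^2G_2)^2+4(\mathrm{Im}\,cG_1)^2,\qquad 2(\mathrm{Im}\,cG_1)^2=|G_1|^2-\mathrm{Re}(c^2G_1^2)=125-\mathrm{Re}(c^2G_1^2),
\]
so only $G_1^2$ enters. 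This is exactly how the paper finishes. It writes $A=X+iY$, gets $X^2+Y^2=125$ and $X^2-Y^2=5B$, and obtains $400$ in all four cases using only $B^2=125$. So neither the exact value $G_2=-5\sqrt5$ nor any numerical evaluation of $g_5(\sigma)$ or $g_{25}(\rho)$ is needed.

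The sign that does matter is the one relating $G_1^2$ to $G_2$, and your displayed formula gets it wrong. Apply $g(\chi)^2=J(\chi,\chi)g(\chi^2)$ to $\rho$ over $\bF_{25}$ and to $\overline\sigma$ over $\bF_5$, using $\overline\sigma^{\,2}=\sigma^2$ and $J(\overline\sigma,\overline\sigma)=\overline{J(\sigma,\sigma)}$. This gives exactly $G_1^2=G_2\,J(\rho,\rho)\overline{J(\sigma,\sigma)}=G_2(5-10i)$, with no factor $\sigma(-1)$. If you pass through $g_5(\overline\sigma)=\sigma(-1)\overline{g_5(\sigma)}$, that factor appears squared. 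With the correct relation, Lemma~\ref{lem:quartic-jacobi-25-5} gives $\mathrm{Re}(c^2G_1^2)=5c^2G_2$, and the sum becomes $25+10c^2G_2+125+250-10c^2G_2=400$. With your extra $\sigma(-1)=-1$ you would instead get $400+20c^2G_2\neq 400$, and the verification would appear to fail.
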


\begin{proof}
We first determine the size of the support. For every fixed $y\in\mathbb F_5^*$, the value $\sigma(y)$ is a fourth root of unity. Since $\rho$ is a
multiplicative character of order $4$, each fiber of $\rho$ has cardinality $\frac{25-1}{4}=6$. Therefore $|P|=4\cdot6=24$. Moreover, $|N|=4$. Thus $k=|P|+|N|=28$. 

It remains to determine the nonprincipal character values of $D$. Let $\zeta=e^{2\pi i/5}$, 
and let $\Psi(x)=\zeta^{\operatorname{Tr}_{25/5}(x)}$, $x\in\mathbb F_{25}$, be the canonical additive character of $\mathbb F_{25}$. Let $\psi(y)=\zeta^y$ for $y\in\mathbb F_5$. Every additive character of $G$ has the form
\[
\Theta_{a,b}(x,y)
=
\Psi(ax)\psi(by),
\qquad
(a,b)\in\mathbb F_{25}\times\mathbb F_5.
\]
The principal character corresponds to $(a,b)=(0,0)$. We show that $\left|\Theta_{a,b}(D)\right|^2=25$ 
for every $(a,b)\ne(0,0)$.

\medskip
\noindent
\textbf{Case 1: $a=0$ and $b\ne0$.}

For each $y\in\mathbb F_5^*$, exactly six elements $x\in\mathbb F_{25}^*$ satisfy $\rho(x)=\sigma(y)$.
Hence
\[
\Theta_{0,b}(P)=6\sum_{y\in\mathbb F_5^*}\psi(by)=-6,
\]
because $b\ne0$. On the other hand,
\[
\Theta_{0,b}(N)
=
\sum_{y\in\mathbb F_5^*}\psi(by)
=-1.
\]
Consequently, $\Theta_{0,b}(D)=-6-(-1)=-5$, and therefore $|\Theta_{0,b}(D)|^2=25$. 

\medskip
\noindent
\textbf{Case 2: $a\ne0$ and $b=0$.}

For every $x\in\mathbb F_{25}^*$, there is a unique $y\in\mathbb F_5^*$ such that $\rho(x)=\sigma(y)$, since $\sigma$
 is bijective. Hence
\[
\Theta_{a,0}(P)=
\sum_{x\in\mathbb F_{25}^*}\Psi(ax)=-1.
\]
Also,
\[
\Theta_{a,0}(N)=4.
\]
Thus $\Theta_{a,0}(D)=-1-4=-5$, and again $|\Theta_{a,0}(D)|^2=25$.

\medskip
\noindent
\textbf{Case 3: $a\ne0$ and $b\ne0$.}

Indeed, since $P=\{(x,y)\in \mathbb F_{25}^*\times\mathbb F_5^*:
\rho(x)=\sigma(y)\}$, we have 
\[
1_P(x,y)=1
\quad\Longleftrightarrow\quad
\rho(x)\sigma(y)^{-1}=1.
\]
For a fourth root of unity $z$, the orthogonality relation 
\[
\frac14\sum_{j=0}^{3}z^j
=
\begin{cases}
1, & z=1,\\
0, & z\ne1
\end{cases}
\]
holds.  Applying this with $z=\rho(x)\sigma(y)^{-1}$ gives
\[
1_P(x,y)
=
\frac14
\sum_{j=0}^{3}
\left(\rho(x)\sigma(y)^{-1}\right)^j
=
\frac14
\sum_{j=0}^{3}
\rho^j(x)\sigma^{-j}(y).
\]
Therefore,
\[
\begin{aligned}
\Theta_{a,b}(P)
&=
\frac14
\sum_{j=0}^{3}
\left(
\sum_{x\in\mathbb F_{25}^*}
\rho^j(x)\Psi(ax)
\right)
\left(
\sum_{y\in\mathbb F_5^*}
\sigma^{-j}(y)\psi(by)
\right).
\end{aligned}
\]

For a multiplicative character $\chi$, write
\[
G_q(\chi)
=
\sum_{x\in\mathbb F_q^*}
\chi(x)\psi(x)
\]
for its Gauss sum with respect to the canonical additive character $\psi$.

For \(a\ne0\) and \(b\ne0\), we have
\[
\sum_{x\in\mathbb F_{25}^*}
\rho^j(x)\Psi(ax)
=
\rho^{-j}(a)G_{25}(\rho^j)
\]
and
\[
\sum_{y\in\mathbb F_5^*}
\sigma^{-j}(y)\psi(by)
=
\sigma^j(b)G_5(\sigma^{-j}).
\]

Set 
\[
\tau=\rho^{-1}(a)\sigma(b)
\in\{1,i,-1,-i\}.
\]
Since the Gauss sum of the trivial multiplicative character is $-1$, the $j=0$ term is equal to $1$. Hence
\[
4\Theta_{a,b}(P)
=
1+\tau A+\tau^2B+\tau^3C,
\]
where
\[
A=
G_{25}(\rho)G_5(\sigma^{-1}),
\]
\[
B=
G_{25}(\rho^2)G_5(\sigma^2),
\]
and
\[
C=
G_{25}(\rho^{-1})G_5(\sigma).
\]

We next record the relations among $A,B,C$. Since $-1=\gamma^{12}$, we have $\rho(-1)=1$. On the other hand, $-1=2^2$ in $\mathbb F_5^*$,  and hence $\sigma(-1)=(-i)^2=-1$. Using the standard identity $G_q(\chi^{-1})=\chi(-1)\overline{G_q(\chi)}$, we obtain $C=-\overline A$.

Since $G_q(\chi)\overline{G_q(\chi)}=q$ and $\overline{G_q(\chi)}=\chi(-1)G_q(\chi)$, we have $G_q(\chi)^2=\chi(-1)|G_q(\chi)|^2=\chi(-1)q$. Since $25\equiv5\equiv 1\pmod 4$, $-1$ is a square in $\bF_{25}$ and $\bF_5$ and $\rho^2(-1)=\sigma^2(-1)=1$. Hence $G_{25}(\rho^2)$ and $G_5(\sigma^2)$ are real, and 
\[
G_{25}(\rho^2)^2=25,
\qquad
G_5(\sigma^2)^2=5.
\] 

Therefore $B$ is real and $B^2=25\cdot5=125$. Moreover, since $\rho$ and $\sigma^{-1}$ are nontrivial,
\[
|A|^2
=
|G_{25}(\rho)|^2|G_5(\sigma^{-1})|^2
=
25\cdot5
=
125.
\] 

The standard relation between Gauss and Jacobi sums, $G_q(\chi)^2=G_q(\chi^2)J(\chi,\chi)$
 gives
\[
\begin{aligned}
A^2
&=
G_{25}(\rho)^2
G_5(\sigma^{-1})^2\\
&=
B\,
J(\rho,\rho)
J(\sigma^{-1},\sigma^{-1}).
\end{aligned}
\]
Since $J(\sigma^{-1},\sigma^{-1})=\overline{J(\sigma,\sigma)}$, Lemma~\ref{lem:quartic-jacobi-25-5} yields $A^2=B(5-10i)$. Consequently, $\operatorname{Re}(A^2)=5B$. Write $A=X+iY$ for some $X,Y\in\mathbb R$. Then $X^2+Y^2=125$ and $X^2-Y^2=5B$. 
Hence $2X^2=125+5B$ and $2Y^2=125-5B$.

Since \(b\ne0\),
\[
\Theta_{a,b}(N)
=
\sum_{y\in\mathbb F_5^*}\psi(by)
=-1.
\]
Thus
\[
\Theta_{a,b}(D)
=
\Theta_{a,b}(P)+1,
\]
and therefore $4\Theta_{a,b}(D)=5+\tau A+\tau^2B-\tau^3\overline A$.

There are four possibilities for $\tau$.

If $\tau=1$, then $4\Theta_{a,b}(D)=5+B+2iY$. Hence
\[
\begin{aligned}
16|\Theta_{a,b}(D)|^2
&=
(5+B)^2+4Y^2\\
&=
25+10B+B^2
   +2(125-5B)\\
&=
25+125+250\\
&=400,
\end{aligned}
\]
where we have used $B^2=125$. Thus $|\Theta_{a,b}(D)|^2=25$.

If $\tau=-1$, then $4\Theta_{a,b}(D)=5+B-2iY$, and the same computation gives $|\Theta_{a,b}(D)|^2=25$. 

If $\tau=i$, then $4\Theta_{a,b}(D)=5-B+2iX$. Therefore, 
\[
\begin{aligned}
16|\Theta_{a,b}(D)|^2
&=
(5-B)^2+4X^2\\
&=
25-10B+B^2
   +2(125+5B)\\
&=400,
\end{aligned}
\]
and hence $|\Theta_{a,b}(D)|^2=25$. 

Finally, if $\tau=-i$, then $4\Theta_{a,b}(D)=5-B-2iX$, and again $|\Theta_{a,b}(D)|^2=25$.

We have therefore proved that $|\Theta(D)|^2=25$ for every nonprincipal character $\Theta$ of $G$.

Since $k=28$, the character criterion for signed difference sets gives $k-\lambda=25$. Thus $\lambda=28-25=3$. Consequently, $DD^{(-1)}=25\cdot 0_G+3G$, and hence $D$ is a $(125,28,3)$-SDS in $G\cong C_5^3$.
\end{proof}

\begin{remark}
\label{rem:125-open}
The ambient group in Theorem~\ref{thm:125-28-3} is the elementary abelian group $C_5^3$.  According to the current SDS database, the existence of a $(125,28,3)$-SDS in $C_5^3$ is listed as open.  Thus
Theorem~\ref{thm:125-28-3} settles this existence case.
\end{remark}

\begin{remark}
The construction is genuinely signed: both the positive and negative supports are nonempty, with $|P|=24$ and $|N|=4$. In particular, the construction is not an ordinary DS. Moreover, the defining sets are obtained directly from quartic multiplicative characters rather than by converting a PDS into a SDS.
\end{remark}

\section{Conclusion}\label{sec_con}
In this paper, we established several new nonexistence criteria for
SDSs in finite abelian groups. The self-conjugate prime obstruction adapts a classical character-sum method, while the support-refined quotient method makes use of the relation between signed intersection numbers and the numbers of nonzero coefficients on cosets. This leads to the $C_2$-quotient, near-full-support, and small-defect obstructions. Applied cumulatively to the open group-specific cases in the signed difference set database with $9\leq v\leq499$, these criteria rule out $45{,}361$ cases.

We also obtained two existence results. The first uses PCP-type
regular partial difference sets arising from Desarguesian spreads to
construct an infinite family of SDSs in elementary abelian $2$-groups.  The second uses quartic multiplicative characters to
construct an explicit SDS in the group of order $125$. Together, these
constructions settle three previously open cases. Possible directions for further research include extending the quotient methods to more general groups and finding additional combinatorial or character-based constructions for the remaining open cases.

\section{Acknowledgements}
This project was supported by the National Natural Science Foundation of China under Grant 12301429.

\bibliographystyle{IEEEtran}
\bibliography{sds}
\end{document}